\documentclass{math-note}

\usepackage[english]{babel}
\usepackage{mathtools,amssymb,amsthm}
\usepackage{microtype}
\usepackage{enumitem}
\usepackage{needspace}
\usepackage[
  hidelinks,
  pdftitle={A polylogarithmic higher-order Cheeger inequality},
  pdfauthor={Yunpeng Li},
  pdfsubject={Higher-order Cheeger inequalities and spectral localization},
  pdfkeywords={spectral graph theory, higher-order Cheeger inequality,
    matrix concentration, Dirichlet eigenvalues}
]{hyperref}

\title{A polylogarithmic higher-order Cheeger inequality}
\author{Yunpeng Li\thanks{School of Data Science, The Chinese University
of Hong Kong, Shenzhen. Email: \texttt{liyunpeng@cuhk.edu.cn}.}}
\date{}

\numberwithin{equation}{section}
\newtheorem{theorem}{Theorem}[section]
\newtheorem{lemma}[theorem]{Lemma}
\newtheorem{proposition}[theorem]{Proposition}

\theoremstyle{definition}

\theoremstyle{remark}
\newtheorem{remark}[theorem]{Remark}
\newcommand{\R}{\mathbb R}
\newcommand{\E}{\mathcal E}
\newcommand{\Prob}{\mathbb P}
\newcommand{\Ex}{\mathbb E}
\newcommand{\one}{\mathbf 1}
\newcommand{\norm}[1]{\lVert #1\rVert}
\DeclareMathOperator{\tr}{tr}
\DeclareMathOperator{\supp}{supp}
\DeclareMathOperator{\rank}{rank}
\DeclareMathOperator{\vol}{vol}
\DeclareMathOperator{\ran}{ran}
\newcommand{\tail}{\operatorname{tail}_1}

\begin{document}
\maketitle

\begin{abstract}
Let $\lambda_k(G)$ be the $k$th eigenvalue of the normalized Laplacian
of a finite undirected weighted graph, and let $\rho_G(k)$ be the
minimum possible maximum conductance of $k$ disjoint nonempty vertex
sets. We prove
\[
 \rho_G(k)\le C[1+\log(k+1)]^5\sqrt{\lambda_k(G)}
\]
for an absolute constant $C$. The construction gives exactly $k$
sets and a bound in terms of $\lambda_k(G)$, with all boundaries and volumes
measured in the original graph. More strongly, it yields $k$ nonnegative
functions with pairwise disjoint supports and Rayleigh quotients
$O([1+\log(k+1)]^{10}\lambda_k(G))$.
The proof uses independent local cutoffs whose lost covariance is
controlled by conditioning on the loss along a principal direction in each cell.
A regularized spectral embedding bounds cutoff energy on the entire
original low eigenspace, while an adaptive construction reduces the
remaining coefficient dimension by at least half at each stage. A dimension
argument then converts almost rank-one local covariances into exactly
$k$ scalar witnesses.
\end{abstract}

\section{Introduction}\label{sec:introduction}

Let $G$ be a finite undirected graph on a vertex set $V$, with symmetric
nonnegative edge weights $(w_{vw})$ and positive degrees
$d_v=\sum_w w_{vw}$. Let $A=(w_{vw})$ be its adjacency matrix and
$D=\operatorname{diag}(d_v)$ its degree matrix. The combinatorial and
normalized Laplacians are, respectively,
\[
 L=D-A
 \qquad\text{and}\qquad
 \mathcal L_G=D^{-1/2}LD^{-1/2}.
\]
Self-loops are allowed. Connected components are defined using
positive-weight nonloop edges. Write the eigenvalues of $\mathcal L_G$ as
$0=\lambda_1(G)\le\cdots\le\lambda_{|V|}(G)$.
For a nonempty vertex set $S$, define
\[
 \phi_G(S)=
 \frac{\sum_{v\in S,\ w\notin S}w_{vw}}{\sum_{v\in S}d_v}.
\]
We use this one-sided conductance even when $S$ has more than half
the total volume. For $1\le k\le |V|$, put
\begin{equation}\label{eq:rho}
 \rho_G(k)=
 \min_{\substack{S_1,\ldots,S_k\ne\varnothing\\
                 S_i\cap S_j=\varnothing\ (i\ne j)}}
       \max_{1\le i\le k}\phi_G(S_i).
\end{equation}
The sets need not cover the graph.
For a nonzero real function $f$ on $V$, define its Rayleigh quotient
by the following ratio, counting each unordered nonloop edge once:
\begin{equation}\label{eq:rayleigh}
 \mathcal R_G(f)=
 \frac{\sum_{\{v,w\}}w_{vw}(f(v)-f(w))^2}
      {\sum_v d_vf(v)^2}.
\end{equation}

Lee, Oveis Gharan, and Trevisan~\cite{LOT} proved the higher-order
Cheeger inequality
\[
 \frac{\lambda_k(G)}2\le\rho_G(k)
       \le O(k^2)\sqrt{\lambda_k(G)}.
\]
The question whether the factor $k^2$ can be replaced by a
polylogarithmic function appears in~\cite[Section~5.2]{LOT} and
in Bandeira's notes~\cite[Open Problem~3.3]{Bandeira}.
The question requires both the number of sets and the spectral index to equal $k$.
We give the following bound.

\begin{theorem}\label{thm:main}
There is an absolute constant $C$ such that every graph $G$ as above
and every integer $1\le k\le |V|$ admit $k$ pairwise disjoint nonempty sets
$S_1,\ldots,S_k$ with
\begin{equation}\label{eq:main}
 \max_{1\le i\le k}\phi_G(S_i)
 \le C[1+\log(k+1)]^5\sqrt{\lambda_k(G)}.
\end{equation}
There also exist nonzero nonnegative functions
$f_1,\ldots,f_k$ with disjoint supports satisfying
\begin{equation}\label{eq:main-functions}
 \mathcal R_G(f_i)
 \le C[1+\log(k+1)]^{10}\lambda_k(G)
 \qquad (1\le i\le k).
\end{equation}
\end{theorem}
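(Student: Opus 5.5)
The plan is to prove the function statement \eqref{eq:main-functions} first and deduce \eqref{eq:main} from it. Given nonnegative $f_1,\dots,f_k$ with disjoint supports, the standard sweep argument (co-area over level sets of $f_i^2$, as in the proof of the usual Cheeger inequality) produces a nonempty level set $S_i\subseteq\supp f_i$ with $\phi_G(S_i)\le\sqrt{2\mathcal R_G(f_i)}$. Disjointness of supports gives disjointness of the $S_i$. Boundaries and volumes are measured in $G$ because $\mathcal R_G$ is. So everything reduces to constructing $k$ disjointly supported functions with Rayleigh quotient $O([1+\log(k+1)]^{10}\lambda_k)$.

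To build the functions, let $U$ span the bottom $k$ eigenvectors of $\mathcal L_G$, written $D^{-1/2}$-normalized, and use the spectral embedding $F:V\to\R^k$, $F(v)=(u_1(v),\dots,u_k(v))$. Its total energy is at most $\lambda_k$ times its total mass $k$. Following the abstract, I would regularize this embedding by adding a small multiple of $d_v$-weighted identity, so that radial projection is Lipschitz and isolated low-mass vertices do not dominate. I would then partition $\R^k$ (or the projective sphere) by a random padded decomposition at scale $\delta\asymp1/\log k$, using Gaussian or CKR-type partitions. I would attach to each cell a smooth cutoff $\psi_j$ whose support lies in its padded region. The cutoffs are independent across cells, and each local function $\psi_j(F)\cdot\langle F,\theta\rangle$ costs energy $O(\lambda_k/\delta^2)$ times mass plus a cutoff term controlled by the regularized embedding. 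The crucial quantity is the covariance $\Sigma_j=\sum_v d_v\psi_j(F(v))^2F(v)F(v)^\top$. Ideally $\sum_j\Sigma_j\succeq cI_k$, which would give $k$ cells, each carrying at least one useful direction.

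The main obstacle is that a single decomposition loses a constant fraction of the covariance in random directions, and a union bound over $k$ dimensions is lossy. My plan here is adaptive halving. At each stage I restrict to the subspace $W$ of coefficient directions not yet captured. I run the random decomposition and, within each cell, condition on the loss along the top principal direction of that cell's covariance. A matrix concentration argument (matrix Chernoff over independent cells, losing $\log k$) then shows that the captured covariance restricted to $W$ has rank at least $\dim W/2$ with constant eigenvalues. The next stage uses the orthogonal complement. After $O(\log k)$ stages, with polylog loss per stage through scale and concentration, all $k$ dimensions are covered. The accumulated factors of $\log$ (scale $\delta^{-2}$, concentration, and the number of stages) are what I expect to give the $[1+\log(k+1)]^{10}$.

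Finally, a dimension argument turns almost rank-one local covariances into exactly $k$ scalar witnesses. Each cell $j$ whose covariance is nearly rank one along $\theta_j$ yields one witness $f_j=\psi_j(F)\,|\langle F,\theta_j\rangle|$. The sum of these rank-one pieces dominates a constant multiple of the identity on $\R^k$, which forces at least $k$ of them with bounded Rayleigh quotient. Cells are disjoint, so the supports are disjoint. Cells that are not nearly rank one can be split further along principal directions, and I would verify that this splitting does not decrease the count of witnesses. I expect this step, together with the conditioning that makes lost covariance controllable, to be the hardest part.
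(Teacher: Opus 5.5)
Your sweep reduction and your idea of conditioning each cell's loss on its top principal direction before applying matrix concentration both agree with the paper. The energy step, however, hides the central difficulty behind ``a cutoff term controlled by the regularized embedding.'' For one coefficient vector $x$, the cutoff term is $\sum_{v,w}c_{vw}(Fx)(v)^2|\psi(v)-\psi(w)|^2$. The usual argument makes the cutoffs Lipschitz in the radial projection and uses $|(Fx)(v)|\le\norm{F(v)}\norm x$. This bounds the term only by $O(\kappa^2\norm x^2\tr\Lambda)$ with $\tr\Lambda\le k\lambda_k$, which is the energy of all $k$ coordinates at once. A bound for each single $x$ therefore loses a factor $k$.

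Producing exactly $k$ witnesses requires the matrix inequality $\sum_jX_j^*\Delta X_j\preceq\mathcal T I_k$, which is a bound for every $x$. It must hold for fields built from the projected embeddings $FZ$ and $FZ\Omega$ used in your stages. Their ranges are not $\Lambda$-invariant, so the row bound $\norm{(\Delta Y)(v)}\le\lambda_k\norm{Y(v)}$, which holds for $Y=F$, is lost. Adding a multiple of the identity to the embedding does not repair this. The paper's Lemma~\ref{lem:envelope} does, in three steps:
\begin{enumerate}
\item It replaces $HH^*$ by $\Theta=\sum_m(\Lambda/a)^mHH^*(\Lambda/a)^m$. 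Then $\Lambda\Theta\Lambda\preceq a^2\Theta$, and the row norms $\varrho$ of $F\Theta^{1/2}$ satisfy $\Delta\varrho\le a\varrho$.
\item It uses the resolvent envelope $h=(I+\Delta/a)^{-1}\varrho$.
\item A ground-state transform and a Caccioppoli estimate give $\sum_v\pi(v)(Fx)(v)^2\Gamma(U)(v)\le80\lambda_k\norm x^2$ for every $x$ in the original eigenspace.
\end{enumerate}
Two smaller ingredients are also missing. First, partitioning $\R^k$ or the sphere directly costs a padding factor polynomial in $k$. The paper avoids this with a Gaussian projection to $r=O(\log k)$ dimensions that only needs to preserve far pairs up to a small weighted failure mass (Lemmas~\ref{gauss-certificates} and~\ref{cell-far-pairs}). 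Second, later-stage fields must be multiplied by the accumulated remainder cutoff $\prod_{q<j}\beta_q$. This keeps supports disjoint across stages, and it makes the stage Lipschitz bounds add (a factor $N$) instead of multiply.

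Your extraction step does not work as stated. Covariances with tail at most $\delta$ whose sum dominates $cI_k$ need not have top rank-one parts that dominate a multiple of $I_k$, and there may be fewer than $k$ cells. For example, take the $k-1$ matrices $\frac58e_je_j^*+\frac5{8(k-1)}e_ke_k^*$ for $j<k$. Each has tail $\frac5{8(k-1)}\le\delta$ for large $k$, and they sum to $\frac58I_k$, with the direction $e_k$ carried entirely by tails. Nothing in your construction excludes this. In that case one witness $\psi_j|\langle F,\theta_j\rangle|$ per cell gives at most $k-1$ functions. Splitting cells that are not nearly rank one does not help, since all of these cells are nearly rank one.

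The paper instead counts, on each support $B_i$, the number $n_i$ of eigenvalues of $\Delta_{B_i}^{\mathrm D}$ at most $\lambda_{\mathrm{cut}}=2\mathcal T/(\gamma-\delta)$, and proves $\sum_i\min(n_i,2)\ge k$ (Lemma~\ref{lem:capacity}). Suppose this fails.
\begin{itemize}
\item For supports with $n_i\ge2$, impose only $x\perp u_i$, where $u_i$ is the top covariance direction. Their mass on the constrained vectors is then at most the tail.
\item For supports with $n_i\le1$, impose $x\perp X_i^*\varphi$ for the low Dirichlet eigenfunctions $\varphi$. Their mass is then at most the energy divided by $\lambda_{\mathrm{cut}}$.
\end{itemize}
The common kernel has dimension $d_0$ exceeding the number of supports of the first kind. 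So the total tail contribution is at most $d_0\delta$, and the matrix energy bound gives a contradiction. A support with $n_i\ge2$ then contributes two disjoint witnesses: the positive and negative parts of a second Dirichlet eigenfunction (Lemma~\ref{lem:two-witnesses}). Your final step is missing this second witness per support, together with the dimension count that makes the cap at two suffice.
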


The bound answers the stated polylogarithmic question.

\paragraph{Related work.}
Logarithmic dependence is known with a relaxation in
the number of sets or the eigenvalue index.
Louis, Raghavendra, Tetali, and Vempala~\cite{LRTV} obtain a constant
fraction of $k$ disjoint sets of conductance
$O(\sqrt{\lambda_k\log k})$.
Lee, Oveis Gharan, and Trevisan~\cite{LOT} also prove
$\rho_G(k)\le O(\sqrt{\lambda_{2k}\log k})$ when $2k\le |V|$.
These results do not by themselves give~\eqref{eq:main}.
The present proof uses isotropic spectral embeddings and scalar
sweep cuts, as in~\cite{LOT}, and the independent-matrix Laplace
inequality of Tropp~\cite{Tropp}. The latter controls a sum of small
residual covariance matrices after conditioning separately on each
cell's loss along a principal covariance direction.

\paragraph{Proof outline.}
We work with the first $k$ orthonormal eigenfunctions.
Section~\ref{sec:selection} controls the sum of local covariance
losses by conditioning separately on the loss along a largest-eigenvalue
direction of each cell covariance and applying matrix concentration to the remaining
directions. Section~\ref{sec:energy} constructs a regularized embedding
whose weighted gradient is controlled on the entire original
$k$-dimensional eigenspace. In Section~\ref{sec:stage}, a Gaussian
projection and a translated grid yield cell covariances whose
eigenvalues other than the largest sum to at most
$\delta=1/[32N_*\log(2k)]$, where $N_*=1+\lceil\log_2k\rceil$.
Independent local thresholds give disjoint cutoff functions and a
remainder cutoff, with covariance loss at most $1/(8N_*)$, remainder covariance trace
at most one eighth of the current dimension, and Lipschitz constant
$O([1+\log(k+1)]^{7/2})$.
Section~\ref{sec:assembly} projects onto eigenspaces of the remainder covariance
for eigenvalues above $1/4$, reducing the coefficient dimension by at least half
per stage. After at most $N_*$ stages, the resulting
fields have disjoint supports, total covariance at least $(5/8)I_k$,
and total energy at most
$O([1+\log(k+1)]^{10}\lambda_k)I_k$. Section~\ref{sec:extraction}
then shows that the local eigenvalue counts up to a constant multiple
of this energy bound, each capped at two, sum to at least $k$.
One or two nonnegative functions with disjoint supports are extracted from each
selected support, and scalar sweeps give the required $k$ sets.

\section{Notation and preliminaries}\label{sec:preliminaries}

We express the graph Laplacian and energy using the random walk on $G$.
For $S\subseteq V$, write $\vol(S)=\sum_{v\in S}d_v$. The probability of moving
from $v$ to $w$ and the stationary probability measure are, respectively,
\[
 P(v,w)=\frac{w_{vw}}{d_v}
 \qquad\text{and}\qquad
 \pi(v)=\frac{d_v}{\vol(V)}.
\]
Symmetry of the edge weights gives $\pi(v)P(v,w)=\pi(w)P(w,v)$.
This identity is called reversibility: at stationarity, each edge is
traversed with the same probability in either direction. It implies that
$\Delta=I-P=D^{-1}L$ is self-adjoint in $L^2(\pi)$. This operator is
similar to $\mathcal L_G$, so it has the same eigenvalues.
For the symmetric edge weights
$c_{vw}=\pi(v)P(v,w)=w_{vw}/\vol(V)$, all sums indexed by $(v,w)$
are over ordered pairs. Define the bilinear energy form by
\begin{equation}\label{eq:energy}
 \E(f,g)=\langle f,\Delta g\rangle_\pi
 =\frac12\sum_{v,w}c_{vw}(f(v)-f(w))(g(v)-g(w)).
\end{equation}
Here $\langle f,g\rangle_\pi=\sum_v\pi(v)f(v)g(v)$ and
$\norm f_\pi^2=\langle f,f\rangle_\pi$.
We abbreviate $\E(f,f)$ to $\E(f)$.
For a vertex set $S$, write $\pi(S)=\sum_{v\in S}\pi(v)$ and
$c(S,S^c)=\sum_{v\in S,w\notin S}c_{vw}$. For $S\ne\varnothing$,
$\phi_G(S)=c(S,S^c)/\pi(S)$; for $f\ne0$,
$\mathcal R_G(f)=\E(f)/\norm f_\pi^2$.
For vector-valued functions, energy is the sum of coordinate energies.
A function on $B\subseteq V$ is extended by zero. Thus an edge
$\{v,w\}$ with $v\in B$ and $w\notin B$ contributes
$c_{vw}f(v)^2$ to $\E(f)$, or $w_{vw}f(v)^2$ before division by
$\vol(V)$. The corresponding compression of $\Delta$ to
$L^2(\pi|_B)$ is denoted $\Delta_B^{\mathrm D}$.

A field $F$ with $d$ columns is a linear map from $\R^d$ into $L^2(\pi)$,
with row $F(v)$ and support $\supp F=\{v:F(v)\ne0\}$.
Coefficient vectors are columns, so $(Fx)(v)=F(v)x$.
Field adjoints use the $\pi$ inner product;
matrix adjoints in coefficient space use the Euclidean inner product.
A scalar function between fields denotes multiplication on the vertex space,
so $F^*fF=\sum_v\pi(v)f(v)F(v)^*F(v)$.
Unless a subscript is given, vector norms are Euclidean and matrix
norms are operator norms. The Frobenius norm is denoted $\norm{\cdot}_{\mathrm F}$.
For symmetric matrices, $\Sigma_1\preceq\Sigma_2$ means that
$\Sigma_2-\Sigma_1$ is positive semidefinite.
The identity $I$ acts on the space determined by context; $I_d$
acts on $\R^d$. The letter $C$ denotes an absolute positive constant
that may increase between estimates. We abbreviate $\lambda_i(G)$
to $\lambda_i$ when the graph is fixed.
All logarithms are natural except for $\log_2$.

Fix an integer $2\le d\le |V|$ with $\nu=\lambda_d(G)>0$. Let the columns of
$F$ be $L^2(\pi)$-orthonormal eigenfunctions of $\Delta$ for
$\lambda_1,\ldots,\lambda_d$, and set
$\Lambda=\operatorname{diag}(\lambda_1,\ldots,\lambda_d)$. Thus
\begin{equation}\label{eq:eigenfield}
 F^*F=I_d,\qquad \Delta F=F\Lambda,\qquad
 0\preceq\Lambda\preceq\nu I_d.
\end{equation}
For every coefficient vector $x\in\R^d$, it follows that
$\norm{Fx}_\pi=\norm x$,
$\E(Fx)\le\nu\norm x^2$, and $\norm{\Delta Fx}_\pi\le\nu\norm x$.
The case $\lambda_d=0$ will be handled at the end.

For a positive semidefinite matrix $\Sigma$, define
\begin{equation}\label{eq:tail}
 \tail(\Sigma)=\tr \Sigma-\lambda_{\max}(\Sigma)
     =\min_{\norm u=1}\tr[(I-uu^*)\Sigma].
\end{equation}
This is its rank-one trace tail. The variational formula shows that
$0\preceq \Sigma'\preceq \Sigma$ implies $\tail(\Sigma')\le \tail(\Sigma)$.

\section{Selecting local losses}\label{sec:selection}

The next lemma chooses one local loss matrix for each cell while
controlling their sum in every coefficient direction. Its hypothesis
bounds the trace outside a largest-eigenvalue direction of each cell
covariance. No simultaneous diagonalization is assumed.

\Needspace{14\baselineskip}
\begin{lemma}\label{lem:selection}
Let $\ell\ge1$ and $\delta,\alpha>0$. Let $(\Sigma_i)$ be a finite family of
positive semidefinite $\ell\times\ell$ matrices such that
\[
 \sum_i \Sigma_i\preceq I_\ell,\qquad \tail(\Sigma_i)\le\delta.
\]
Suppose independent random matrices $(M_i)$ satisfy
\[
 0\preceq M_i\preceq \Sigma_i,\qquad
 \Ex M_i\preceq\alpha \Sigma_i.
\]
There is a realization for which
\begin{equation}\label{eq:selection}
 \norm{\sum_iM_i}
 \le 4e\alpha+2\delta\log(2\ell).
\end{equation}
\end{lemma}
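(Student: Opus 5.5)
Fix for each cell a unit vector $u_i$ along a largest-eigenvalue direction of $\Sigma_i$, and put $P_i=u_iu_i^*$ and $Q_i=I_\ell-P_i$. The idea is to split each $M_i$ into a rank-one part along $u_i$ and a residual, and to control the two parts by different mechanisms.

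\textbf{Splitting.} Since $M_i\succeq0$, the Schur-complement (Cauchy--Schwarz) bound $\begin{pmatrix}a&b\\b^*&c\end{pmatrix}\preceq\begin{pmatrix}2a&0\\0&2c\end{pmatrix}$ gives
\[
 M_i\preceq 2P_iM_iP_i+2Q_iM_iQ_i
 =2s_iP_i+2Q_iM_iQ_i,
 \qquad s_i=u_i^*M_iu_i .
\]
The residual is small in trace: $\tr(Q_iM_iQ_i)\le\tr(Q_i\Sigma_iQ_i)=\tail(\Sigma_i)\le\delta$, using $M_i\preceq\Sigma_i$ and the variational formula~\eqref{eq:tail}. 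Consequently $0\preceq Q_iM_iQ_i\preceq\delta I$, each summand has norm at most $\delta$, and $\Ex\sum_iQ_iM_iQ_i\preceq\alpha\sum_iQ_i\Sigma_iQ_i$.

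\textbf{Conditioning on the principal loss.} The scalar $s_i$ is where conditioning enters. Take $\mu_i=\lambda_{\max}(\Sigma_i)$, so $s_i\in[0,\mu_i]$ and $\Ex s_i\le\alpha\mu_i$. Rather than controlling $\sum_is_iP_i$ by concentration, I would control it deterministically through the realization. We have $\sum_i\mu_iP_i\preceq\sum_iP_i\Sigma_iP_i\preceq\sum_i\Sigma_i+\sum_iQ_i\Sigma_iQ_i$ up to constants, but this route is awkward.

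The simpler plan is to bound the full Laplace transform. By Tropp's subadditivity of matrix cumulant generating functions for independent summands,
\[
 \Ex\tr\exp\Bigl(\theta\sum_iX_i\Bigr)\le\tr\exp\Bigl(\sum_i\log\Ex e^{\theta X_i}\Bigr),
 \qquad X_i=s_iP_i+Q_iM_iQ_i .
\]
For the rank-one part, $e^{\theta s_iP_i}=I+(e^{\theta s_i}-1)P_i$. Convexity on $[0,\mu_i]$ gives $\Ex(e^{\theta s_i}-1)\le\alpha\mu_i(e^{\theta\mu_i}-1)/\mu_i$. Since $\mu_i\le1$ from $\Sigma_i\preceq I$, this is $\le\alpha(e^\theta-1)\mu_i$ when $\theta\mu_i\le\theta$. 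The residual part uses $\norm{Q_iM_iQ_i}\le\delta$ and gives $\log\Ex e^{\theta Y}\preceq\frac{e^{\theta\delta}-1}{\delta}\Ex Y$.

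The obstacle is that $P_i$ and $Q_iM_iQ_i$ do not commute. I would therefore handle the two parts in separate steps. First condition on all $s_i$; conditional independence persists because the cells are independent, and the conditional law of $M_i$ is used given $s_i$. The conditional bound $\Ex[Q_iM_iQ_i\mid s_i]$ is problematic, so I would apply the trace-tail bound $\delta$ directly instead.

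\textbf{Choosing $\theta$.} Take $\theta=1/\delta$ for the residual. This yields $\norm{\sum_iQ_iM_iQ_i}\le e\alpha+\delta\log\ell$ in expectation-over-$\Ex\lambda_{\max}$ form. For the rank-one part, the principal directions satisfy $\sum_i\mu_iP_i\preceq\sum_i\Sigma_i\preceq I$, because $\mu_iP_i\preceq\Sigma_i$. Then Tropp's inequality with $\theta=1/\delta$ again, together with the bound on each $s_iP_i$ by $\mu_iP_i\preceq\Sigma_i$ (so norm $\le1$), will give $\Ex\lambda_{\max}\le e\alpha+\delta\log\ell$ after optimizing with $\theta=1$.

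Summing the two parts with the factor $2$ from the splitting gives an expectation bound of order $4e\alpha+2\delta\log(2\ell)$. A realization at most the expectation then exists.

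The main obstacle is the rank-one part, where the summands have norm up to $1$ rather than $\delta$. Using $\theta=1$ gives $e\alpha+\log\ell$, which is too weak. I would first try a Chernoff-type bound exploiting that $\sum_i\mu_iP_i\preceq I$ keeps the total variance at most $\alpha$, accepting that the additive $\log\ell$ term must be absorbed differently, possibly through the paper's conditioning device.
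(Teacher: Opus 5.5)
\textbf{The gap is in the rank-one part.} Your splitting $M_i\preceq 2s_iP_i+2Q_iM_iQ_i$ is correct. So is your control of the residual: $0\preceq Q_iM_iQ_i\preceq\delta I_\ell$ and $\sum_i\Ex Q_iM_iQ_i\preceq\alpha I_\ell$. But $\sum_is_iP_i$ is never bounded, and the route you sketch for it cannot succeed.

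The claim that Tropp's inequality gives $e\alpha+\delta\log\ell$ for this sum is wrong. Its summands have norm up to $1$, so the best available is about $(e-1)\alpha+\log\ell$, as you concede at the end.

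More fundamentally, the endgame ``bound the expectation, then pick a realization below it'' is doomed, because $\Ex\norm{\sum_iM_i}$ itself can exceed the right side of \eqref{eq:selection}. Take $\Sigma_i=e_ie_i^*$ for $1\le i\le\ell$, so that $\sum_i\Sigma_i=I_\ell$ and $\tail(\Sigma_i)=0\le\delta$ for every $\delta>0$. Let $M_i=\xi_i\Sigma_i$ with independent $\xi_i\in\{0,1\}$ and $\Prob(\xi_i=1)=\alpha$. Then $\norm{\sum_iM_i}=\max_i\xi_i$, which equals $1$ except with probability $(1-\alpha)^\ell\le e^{-\alpha\ell}$. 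With $\alpha=\ell^{-1/2}$ and $\delta=1/\ell$, the right side of \eqref{eq:selection} tends to $0$. Here the residuals vanish, so the whole difficulty sits in the rank-one part. The good event has probability at most $e^{-\sqrt\ell}$, so neither averaging nor a high-probability bound can locate it.

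\textbf{The missing idea is to condition each cell separately on a Markov event, as the paper does.} Let $\mathcal A_i=\{s_i\le2\alpha\mu_i\}$, so that $\Prob(\mathcal A_i)\ge1/2$. Sample each $M_i$ from its law conditioned on $\mathcal A_i$, independently across $i$.

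Under this product law the $M_i$ remain independent, and on every outcome $\sum_is_iP_i\preceq2\alpha\sum_i\mu_iP_i\preceq2\alpha I_\ell$. Your worry about $\Ex[Q_iM_iQ_i\mid s_i]$ disappears because you condition on an event of probability at least $1/2$, not on the value of $s_i$. For positive semidefinite $Y$ one has $\Ex[Y\one_{\mathcal A_i}]/\Prob(\mathcal A_i)\preceq2\Ex Y$. Hence the conditional means of the residuals sum to at most $2\alpha I_\ell$, while still $0\preceq Q_iM_iQ_i\preceq\delta I_\ell$.

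Matrix Chernoff at $\theta=1/\delta$ under the product conditional law then gives $\lambda_{\max}(\sum_iQ_iM_iQ_i)\le2(e-1)\alpha+\delta\log(2\ell)$ with probability at least $1/2$. Averaging would also be legitimate now, since the rank-one part is bounded surely. Inserting both bounds into your splitting gives $4\alpha+4(e-1)\alpha+2\delta\log(2\ell)=4e\alpha+2\delta\log(2\ell)$, exactly \eqref{eq:selection}. The paper reaches the same constant using the triangle inequality and $(\sqrt a+\sqrt b)^2\le2a+2b$ in place of your operator splitting.
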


\begin{proof}
If $\Sigma_i=0$, then $M_i=0$, so omit these indices. For each remaining
$i$, choose a unit eigenvector $u_i$ corresponding to
$\lambda_{\max}(\Sigma_i)$. Define
\[
 \Pi_i^{\mathrm{top}}=u_iu_i^*,\qquad \Sigma_i^\perp=\Sigma_i-\Pi_i^{\mathrm{top}}\Sigma_i\Pi_i^{\mathrm{top}}.
\]
The projection $\Pi_i^{\mathrm{top}}$ commutes with its own covariance $\Sigma_i$. Hence
$\Sigma_i^\perp$ is positive semidefinite and
$\tr \Sigma_i^\perp=\tail(\Sigma_i)\le\delta$, which implies $\Sigma_i^\perp\preceq\delta I_\ell$.
Both $\Sigma_i^\perp$ and $\Pi_i^{\mathrm{top}}\Sigma_i\Pi_i^{\mathrm{top}}$ are bounded above by $\Sigma_i$, so
\[
 \sum_i\Sigma_i^\perp\preceq I_\ell,\qquad
 \sum_i\Pi_i^{\mathrm{top}}\Sigma_i\Pi_i^{\mathrm{top}}\preceq I_\ell.
\]
For each $i$, let $\mathcal A_i$ be the event
\[
 u_i^*M_iu_i\le2\alpha\lambda_{\max}(\Sigma_i).
\]
Since
$\Ex(u_i^*M_iu_i)\le\alpha\lambda_{\max}(\Sigma_i)$, Markov's inequality
gives $\Prob(\mathcal A_i)\ge1/2$. Sample each $M_i$ from its law
conditioned on $\mathcal A_i$, independently across $i$. For every
outcome of this product law,
\[
 \Pi_i^{\mathrm{top}}M_i\Pi_i^{\mathrm{top}}=(u_i^*M_iu_i)\Pi_i^{\mathrm{top}}
 \preceq2\alpha \Pi_i^{\mathrm{top}}\Sigma_i\Pi_i^{\mathrm{top}}.
\]
Summing over $i$ gives
\begin{equation}\label{eq:selection-principal}
 \sum_i\Pi_i^{\mathrm{top}}M_i\Pi_i^{\mathrm{top}}\preceq2\alpha I_\ell.
\end{equation}
Define the random residual matrices by
$M_i^\perp=(I_\ell-\Pi_i^{\mathrm{top}})M_i(I_\ell-\Pi_i^{\mathrm{top}})$. They satisfy
\[
 0\preceq M_i^\perp\preceq \Sigma_i^\perp\preceq\delta I_\ell.
\]
Their conditional expectations satisfy the explicit bound
\[
 \Ex_{\mathrm{cond}}M_i^\perp
 =\frac{\Ex(M_i^\perp\one_{\mathcal A_i})}{\Prob(\mathcal A_i)}
 \preceq2\Ex M_i^\perp
 \preceq2\alpha(I_\ell-\Pi_i^{\mathrm{top}})\Sigma_i(I_\ell-\Pi_i^{\mathrm{top}})
 =2\alpha \Sigma_i^\perp.
\]
In particular,
$\norm{\sum_i\Ex_{\mathrm{cond}}M_i^\perp}\le2\alpha$.

We use the following consequence of the independent-matrix Laplace
inequality~\cite[Theorem~3.6 and Corollary~3.7]{Tropp}: if
the matrices $M_i^\perp$ are independent, satisfy
$0\preceq M_i^\perp\preceq\delta I_\ell$, and obey
$\norm{\sum_i\Ex M_i^\perp}\le\alpha_*$, then, for every real $t$,
\begin{equation}\label{eq:matrix-laplace}
 \Prob\left\{\lambda_{\max}\left(\sum_iM_i^\perp\right)\ge t\right\}
 \le\ell\exp\left(\frac{-t+(e-1)\alpha_*}{\delta}\right).
\end{equation}
Indeed, for every $\upsilon>0$, the scalar chord inequality on
$[0,\delta]$, applied to the eigenvalues of $M_i^\perp$, gives
\[
 \Ex e^{\upsilon M_i^\perp}
 \preceq I+\frac{e^{\upsilon\delta}-1}{\delta}\Ex M_i^\perp
 \preceq
 \exp\left(\frac{e^{\upsilon\delta}-1}{\delta}\Ex M_i^\perp\right).
\]
The matrix logarithm is order preserving on positive definite
matrices. At $\upsilon=1/\delta$, summing the logarithmic bounds gives
\[
 \sum_i\log\Ex e^{M_i^\perp/\delta}
 \preceq\frac{e-1}{\delta}\sum_i\Ex M_i^\perp
 \preceq\frac{(e-1)\alpha_*}{\delta}I_\ell.
\]
Every eigenvalue of this sum is therefore at most $(e-1)\alpha_*/\delta$.
The cited Laplace inequality yields
\[
 \Prob\{\lambda_{\max}(\sum_iM_i^\perp)\ge t\}
 \le e^{-t/\delta}
       \tr\exp\left(\sum_i\log\Ex e^{M_i^\perp/\delta}\right)
 \le \ell\exp\left(\frac{-t+(e-1)\alpha_*}{\delta}\right),
\]
which proves \eqref{eq:matrix-laplace}. Apply it under the product
conditional law with $\alpha_*=2\alpha$ and
$t=2(e-1)\alpha+\delta\log(2\ell)$. With probability at least $1/2$,
\begin{equation}\label{eq:selection-residual}
 \norm{\sum_iM_i^\perp}\le
        2(e-1)\alpha+\delta\log(2\ell).
\end{equation}

Fix an outcome satisfying \eqref{eq:selection-residual}. For every
$x\in\mathbb R^\ell$, decompose
$M_i^{1/2}x=M_i^{1/2}\Pi_i^{\mathrm{top}}x+M_i^{1/2}(I_\ell-\Pi_i^{\mathrm{top}})x$.
The triangle inequality in the direct sum over $i$ gives
\[
 \left(\sum_i x^*M_ix\right)^{1/2}
 \le
 \left(\sum_i x^*\Pi_i^{\mathrm{top}}M_i\Pi_i^{\mathrm{top}}x\right)^{1/2}
 +
 \left(\sum_i x^*(I-\Pi_i^{\mathrm{top}})M_i(I-\Pi_i^{\mathrm{top}})x\right)^{1/2}.
\]
Apply \eqref{eq:selection-principal} and \eqref{eq:selection-residual},
take the supremum over $\norm x=1$, and use
$(\sqrt a+\sqrt b)^2\le2a+2b$ for $a,b\ge0$ to obtain
\[
 \norm{\sum_iM_i}
 \le\left(\sqrt{2\alpha}
       +\sqrt{2(e-1)\alpha+\delta\log(2\ell)}\right)^2
 \le4e\alpha+2\delta\log(2\ell).\qedhere
\]
\end{proof}

\begin{remark}\label{rem:selection-independence}
Independence is used only under the product of the separately
conditioned local laws, to show that
\eqref{eq:selection-residual} has positive probability.
The chosen deterministic family requires no further independence
property. The bound does not depend on the number of cells.
\end{remark}

\section{Regularized embeddings and localization energy}
\label{sec:energy}

The geometric construction uses a projection of the low eigenspace,
while the resulting energy estimate must hold on the entire original
coefficient space.  We obtain this estimate by regularizing the
projected field and using a weighted ground-state transform.
Throughout this section, fix \(\nu>0\) and a real field \(F\)
with \(d\) columns such that
\begin{equation}
  F^*F=I_d,\qquad \Delta F=F\Lambda,\qquad
  0\preceq\Lambda\preceq\nu I_d.
  \label{eq:energy-eigenfield}
\end{equation}
The adjoint of a vertex field is taken in \(L^2(\pi)\).
All edge sums are over ordered pairs, with
\(c_{vw}=\pi(v)P(v,w)=c_{wv}\).

\Needspace{24\baselineskip}
\begin{lemma}[Regularized envelope]
\label{lem:envelope}
Let \(H\in\R^{d\times r}\), where \(r\ge1\), and put \(a=2\nu\).
Regularize \(HH^*\) by the convergent series
\begin{equation}
  \Theta=\sum_{m=0}^{\infty}(\Lambda/a)^mHH^*(\Lambda/a)^m.
  \label{eq:energy-envelope-definition}
\end{equation}
Let \(W=F\Theta^{1/2}\), and write \(\varrho(v)=\|W(v)\|\) for its
pointwise norm.  Define its scalar envelope by
\begin{equation}
  h=(I+\Delta/a)^{-1}\varrho.
\end{equation}
Then \(h\ge0\), and \(J=\{v:h(v)>0\}\) is a union of connected
components of the graph.  For \(v\in J\), normalize the field by
\(U(v)=W(v)/h(v)\), and put \(V_h(v)=\Delta h(v)/h(v)\).  Define the
weighted squared gradient of \(U\) on \(J\) by
\begin{equation}
  \Gamma(U)(v)=\frac12\sum_{w\in J}P(v,w)\frac{h(w)}{h(v)}
                         \|U(v)-U(w)\|^2.
  \label{eq:energy-gradient-definition}
\end{equation}
Set \(U=V_h=\Gamma(U)=0\) outside \(J\). The normalized field and
potential satisfy pointwise bounds, and the envelope has bounded total mass:
\begin{equation}
  \begin{aligned}
    \|U(v)\|&\le2,\\
    |V_h(v)|&\le a,\\
    \|h\|_\pi^2&\le2\|H\|_{\mathrm F}^2.
  \end{aligned}
  \label{eq:envelope-properties}
\end{equation}
Moreover, every coefficient vector \(x\in\R^d\) satisfies
\begin{equation}
  \sum_v\pi(v)|(Fx)(v)|^2\Gamma(U)(v)
  \le80\nu\|x\|^2.
  \label{eq:energy-full-space-gradient}
\end{equation}
There is also a contraction \(Q:\R^r\to\R^d\) such that
\(H=\Theta^{1/2}Q\).
Consequently the field \(U_0=FH/h\) on \(J\), extended by zero
outside \(J\), satisfies \(U_0=UQ\).
\end{lemma}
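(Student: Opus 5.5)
The plan is to extract everything from two structural facts about $\Theta$. First, since $0\preceq\Lambda/a\preceq\tfrac12 I_d$, the series converges, $HH^*\preceq\Theta$, and
\[
\Theta-(\Lambda/a)\Theta(\Lambda/a)=HH^*\succeq0 .
\]
Second, $\|(\Lambda/a)^mH\|_{\mathrm F}^2\le4^{-m}\|H\|_{\mathrm F}^2$, so $\tr\Theta\le\tfrac43\|H\|_{\mathrm F}^2$.

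The factorization $H=\Theta^{1/2}Q$ with $\|Q\|\le1$ is Douglas' lemma applied to $HH^*\preceq\Theta$. Concretely, take $Q=\Theta^{+1/2}H$; the range of $H$ lies in the range of $\Theta$, and $Q^*Q\preceq I$. Then $FH=WQ$, and dividing by $h$ on $J$ gives $U_0=UQ$.

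For positivity and support, write $R=(I+\Delta/a)^{-1}=\sum_{n\ge0}(1+1/a)^{-n-1}a^{-n}P^n$. This operator has nonnegative entries, and $R(v,w)>0$ exactly when $v$ and $w$ lie in the same component. Since $\varrho\ge0$, we get $h\ge0$, and $J$ is the union of the components on which $\varrho\not\equiv0$.

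For the mass bound, $\|\varrho\|_\pi^2=\tr(W^*W)=\tr\Theta$. Because $\Delta\succeq0$, $R$ is a contraction on $L^2(\pi)$. Hence $\|h\|_\pi^2\le\tr\Theta\le2\|H\|_{\mathrm F}^2$.

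The pointwise bound $\|U\|\le2$ is the key comparison, and the regularization is designed to give it. Since $R$ is entrywise positive, it dominates vector fields pointwise: $\|(RG)(v)\|\le(R\|G\|)(v)$. Apply this to $W=R\,G$ with $G=F(I+\Lambda/a)\Theta^{1/2}$, which is valid because $\Delta F=F\Lambda$. Then
\[
\|G(v)\|\le\varrho(v)+\bigl(F(v)(\Lambda/a)\Theta(\Lambda/a)F(v)^*\bigr)^{1/2}\le2\varrho(v),
\]
using $(\Lambda/a)\Theta(\Lambda/a)\preceq\Theta$. Therefore $\varrho\le2h$, which is $\|U\|\le2$ on $J$. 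Next, $\Delta h=a(\varrho-h)$ gives $V_h=a(\varrho/h-1)\in[-a,a]$.

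The main obstacle is \eqref{eq:energy-full-space-gradient}. Here the weight $(Fx)^2$ is arbitrary in the full eigenspace, while $U$ was built only from $\Theta$, and $\Theta$ need not dominate $xx^*$. My plan is a weighted ground-state (Doob transform) identity. For $v\in J$, I would expand
\[
\|U(v)-U(w)\|^2=\|U(v)\|^2+\|U(w)\|^2-2\langle U(v),U(w)\rangle,
\]
multiply by $\pi(v)P(v,w)f(v)^2h(w)/h(v)$ with $f=Fx$, and symmetrize over the ordered pair using reversibility. The cross terms should reassemble into expressions of the form $\langle W,\,f^2\Delta W/h\rangle$ plus a commutator term. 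That commutator term is controlled by $\sum c_{vw}(f(v)-f(w))^2\cdot\|U\|^2\le4\cdot2\E(f)\le8\nu\|x\|^2$.

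The remaining pieces involve $\Delta W=F\Lambda\Theta^{1/2}$, and $\|F(v)\Lambda\Theta^{1/2}\|\le a\varrho(v)\le2ah(v)$ by the same comparison as above. They also involve $V_h$, which is bounded by $a$. Each of these is a multiplier bounded by $O(a)$ against $\sum\pi f^2\|U\|^2\le4\|x\|^2$. Collecting constants with $a=2\nu$ should give a bound $C\nu\|x\|^2$ with $C\le80$.

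If the symmetrization does not close cleanly, the fallback is to bound the gradient directly. I would use
\[
f(v)^2\|U(v)-U(w)\|^2\le2\|f(v)U(v)-f(w)U(w)\|^2+2(f(v)-f(w))^2\|U(w)\|^2,
\]
and estimate the first term by the energy of the vector field $fU$, again via $\|U\|\le2$ and $|V_h|\le a$.
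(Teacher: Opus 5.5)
Everything except the gradient bound is sound. That includes your variant proof of $\varrho\le2h$, which writes $W=(I+\Delta/a)^{-1}F(I+\Lambda/a)\Theta^{1/2}$ and dominates pointwise; the paper instead derives $\Delta\varrho\le a\varrho$. The gap is in \eqref{eq:energy-full-space-gradient}, which is the heart of the lemma. Write $g=f/h$, $\mu=\pi h^2$, $c^h_{vw}=c_{vw}h(v)h(w)$, let $\E_h$ be the corresponding Dirichlet form, and let $\mathcal I$ be the left side of \eqref{eq:energy-full-space-gradient}. Then the expansion you describe yields exactly
\[
 \mathcal I=\sum_v\pi(v)\frac{f(v)^2}{h(v)^2}\langle W(v),\Delta W(v)\rangle
 -\sum_v\pi(v)V_h(v)f(v)^2\norm{U(v)}^2-\tfrac12\E_h\bigl(g^2,\norm{U}^2\bigr).
\]
The first two terms are at most $8a\norm x^2$, as you say.

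The commutator, however, pairs differences of $g^2$ with differences of $\norm U^2$. It is not controlled by $\sum c_{vw}(f(v)-f(w))^2\norm U^2$. For $f=Fx$ in the kernel of $\Delta$ one has $\E(f)=0$, yet this term involves $1/h^2$ and $\norm U^2$, which are not constant in general. Suppose you bound $|\norm{U(v)}^2-\norm{U(w)}^2|$ only by $4$. Then Cauchy--Schwarz leaves $\sqrt{2\E_h(g)}\,\bigl(\sum c^h_{vw}(g(v)+g(w))^2\bigr)^{1/2}$. Since $\sum_wc^h_{vw}=\mu(v)(1-V_h(v))$, this is of order $\sqrt{(\nu+a)(1+a)}\,\norm x^2$, which scales like $\sqrt\nu$ rather than $\nu$.

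Your fallback fails for a related reason. The weight $c_{vw}h(w)/h(v)$ is controlled only in row sums, not edge by edge. So $\sum c_{vw}\frac{h(w)}{h(v)}(f(v)-f(w))^2$ need not be bounded by a multiple of $\E(f)$, and your first term need not be bounded by $\E(fU)$. Moreover, $\E(fU)$ is not bounded by $\norm U\le2$ and $|V_h|\le a$ alone.

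Two ideas are missing. First, the relevant energy is the transformed one. The ground-state identity $\E(hg)-\E_h(g)=\sum_{v\in J}\pi(v)V_h(v)h(v)^2g(v)^2$ gives $\E_h(f/h)\le(\nu+a)\norm x^2$. Second, the commutator must be estimated against $\mathcal I$ itself. The bound $|\norm{U(v)}^2-\norm{U(w)}^2|\le4\norm{U(v)-U(w)}$ and Cauchy--Schwarz give $\tfrac12|\E_h(g^2,\norm U^2)|\le4\sqrt{\E_h(g)\,\mathcal I}$. Absorbing half of $\mathcal I$ yields $\mathcal I\le16(\E_h(g)+a\norm{g}_\mu^2)\le80\nu\norm x^2$; this is the paper's Caccioppoli step.

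Alternatively, your fallback can be repaired without absorption:
\begin{enumerate}
\item Symmetrize first and split $g(v)^2+g(w)^2=(g(v)-g(w))^2+2g(v)g(w)$.
\item Bound the first piece by $8\E_h(g)$.
\item By the localization identity, the second piece $\tfrac12\sum c_{vw}f(v)f(w)\norm{U(v)-U(w)}^2$ equals $\E(gW)-\langle\norm U^2f,\Delta f\rangle_\pi$.
\item Expand $\E(gW)$ by the same identity, using $\norm{\Delta W(v)}\le a\varrho(v)$ and $\varrho\le2h$, to get $\E(gW)\le4a\norm x^2+4\E_h(g)$.
\end{enumerate}
Together with $|\langle\norm U^2f,\Delta f\rangle_\pi|\le4\nu\norm x^2$, this gives $\mathcal I\le48\nu\norm x^2$.
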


\begin{proof}
Since \(\|\Lambda/a\|\le1/2\), the series defining \(\Theta\)
converges in operator norm.  Its summands are positive semidefinite, so
\(HH^*\preceq \Theta\).  Shifting the index gives
\begin{equation}
  \Lambda \Theta\Lambda
  =a^2\sum_{m=1}^{\infty}(\Lambda/a)^mHH^*(\Lambda/a)^m
  =a^2(\Theta-HH^*)\preceq a^2\Theta.
  \label{eq:energy-regularization}
\end{equation}
Taking traces and using the same bound on \(\Lambda/a\) yields
\begin{equation}
  \tr \Theta
  =\sum_{m\ge0}\|(\Lambda/a)^mH\|_{\mathrm F}^2
  \le\sum_{m\ge0}4^{-m}\|H\|_{\mathrm F}^2
  =\frac43\|H\|_{\mathrm F}^2.
  \label{eq:energy-trace}
\end{equation}
These conclusions allow \(\Theta\) to be singular.

By \eqref{eq:energy-eigenfield} and
\eqref{eq:energy-regularization}, each row satisfies
\begin{equation}
  \|\Delta W(v)\|^2
  =F(v)\Lambda \Theta\Lambda F(v)^*
  \le a^2F(v)\Theta F(v)^*
  =a^2\varrho(v)^2.
  \label{eq:energy-row-bound}
\end{equation}
This implies \(\Delta \varrho\le a\varrho\) pointwise.  Indeed, at a vertex with
\(\varrho(v)>0\), the vector \(W(v)/\varrho(v)\) has norm one.  Therefore
\begin{equation}
  \begin{aligned}
    \Delta \varrho(v)
    &=\varrho(v)-\sum_wP(v,w)\|W(w)\|\\
    &\le
      \left\langle \frac{W(v)}{\varrho(v)},
                    W(v)-\sum_wP(v,w)W(w)\right\rangle
     \le\|\Delta W(v)\|
     \le a\varrho(v).
  \end{aligned}
  \label{eq:energy-norm-subsolution}
\end{equation}
At a vertex with \(\varrho(v)=0\), the same inequality follows from
\(\Delta \varrho(v)=-P\varrho(v)\le0\).

The resolvent is positivity preserving, as is seen from the
entrywise nonnegative series
\begin{equation}
  (I+\Delta/a)^{-1}
   =\frac{a}{1+a}\sum_{m=0}^{\infty}\frac{P^m}{(1+a)^m}.
  \label{eq:energy-resolvent}
\end{equation}
Applying it to \((I+\Delta/a)\varrho\le2\varrho\) yields \(\varrho\le2h\), while the
definition of \(h\) gives \(\Delta h=a(\varrho-h)\). On every connected
component where \(\varrho\) is not identically zero, the series
\eqref{eq:energy-resolvent} makes \(h\) strictly positive at every
vertex.  On all other components \(\varrho=h=0\).  Thus \(J\) is a
union of components.  For \(v\in J\), the inequality \(\varrho\le2h\)
gives \(\|U(v)\|=\varrho(v)/h(v)\le2\).  The potential satisfies
\begin{equation}
  V_h(v)=a\left(\frac{\varrho(v)}{h(v)}-1\right)\in[-a,a].
  \label{eq:energy-pointwise-envelope}
\end{equation}
Since \(\Delta\) is nonnegative and self-adjoint in \(L^2(\pi)\), its
resolvent is a contraction in that space.  Hence
\begin{equation}
  \|h\|_\pi^2\le\|\varrho\|_\pi^2
   =\tr(F^*F\Theta)=\tr \Theta\le2\|H\|_{\mathrm F}^2.
  \label{eq:energy-envelope-mass}
\end{equation}
This proves \eqref{eq:envelope-properties}.

We next prove \eqref{eq:energy-full-space-gradient}.  If \(J\) is
empty, the assertion is immediate.  Otherwise, all expressions
involving division by \(h\) below are restricted to \(J\).
Give \(J\) the vertex measure \(\mu(v)=\pi(v)h(v)^2\).
The transformed conductances are
\begin{equation}
  c^h_{vw}=c_{vw}h(v)h(w),\qquad v,w\in J.
  \label{eq:energy-weighted-graph}
\end{equation}
The weighted Laplacian with vertex measure \(\mu\) and conductances
\(c^h\) acts on a scalar function \(g:J\to\R\) by
\begin{equation}
  (\Delta_hg)(v)=\sum_{w\in J}P(v,w)\frac{h(w)}{h(v)}
                             (g(v)-g(w)),\qquad v\in J.
\end{equation}
For scalar functions \(g_1,g_2:J\to\R\), its Dirichlet form is
\begin{equation}
  \E_h(g_1,g_2)=\frac12\sum_{v,w\in J}c^h_{vw}
        (g_1(v)-g_1(w))(g_2(v)-g_2(w)).
  \label{eq:energy-transformed-form}
\end{equation}
Write \(\E_h(g)=\E_h(g,g)\).  Symmetry gives
\(\langle g_1,\Delta_hg_2\rangle_\mu=\E_h(g_1,g_2)\).
For every \(g:J\to\R\), extend \(hg\) by zero outside \(J\).
Since no positive-weight edges join \(J\) to its complement, expansion at each
vertex of \(J\) gives
\begin{equation}
  \frac{\Delta (hg)}h=\Delta_hg+V_hg.
  \label{eq:energy-ground-state}
\end{equation}
The corresponding ground-state identity for the quadratic form
follows by subtracting the two edge energies:
\begin{equation}
  \begin{aligned}
    \E(hg)-\E_h(g)
    &=\frac12\sum_{v,w\in J}c_{vw}
       \bigl[h(v)(h(v)-h(w))g(v)^2
            +h(w)(h(w)-h(v))g(w)^2\bigr]\\
    &=\sum_{v\in J}\pi(v)h(v)g(v)^2\Delta h(v)\\
    &=\sum_{v\in J}\pi(v)V_h(v)h(v)^2g(v)^2.
  \end{aligned}
  \label{eq:energy-ground-state-expansion}
\end{equation}

The identity \eqref{eq:energy-ground-state} applies
coordinatewise to \(U\), because \(W=hU\).  Thus
\eqref{eq:energy-row-bound} and \(V_h\ge-a\) imply the pointwise
estimate
\begin{equation}
  \begin{aligned}
    \langle U(v),(\Delta_hU)(v)\rangle
    &=\left\langle U(v),\frac{\Delta W(v)}{h(v)}\right\rangle
                          -V_h(v)\|U(v)\|^2\\
    &\le(a-V_h(v))\|U(v)\|^2
     \le2a\|U(v)\|^2.
  \end{aligned}
  \label{eq:energy-vector-subsolution}
\end{equation}
Expanding a squared difference also gives
\begin{equation}
  \Delta_h\|U\|^2=2\langle U,\Delta_hU\rangle-2\Gamma(U).
  \label{eq:energy-carre-identity}
\end{equation}
The inner products in
\eqref{eq:energy-vector-subsolution}--\eqref{eq:energy-carre-identity}
are Euclidean inner products at a single vertex.

Fix a real scalar function \(g\) on \(J\), and denote the
weighted gradient quantity to be estimated by
\begin{equation}
  \mathcal I_g=\sum_{v\in J}\mu(v)g(v)^2\Gamma(U)(v).
\end{equation}
Multiplying \eqref{eq:energy-carre-identity} by \(g^2\),
integrating, and using \eqref{eq:energy-vector-subsolution}
together with \(\|U(v)\|\le2\) gives
\begin{equation}
  \mathcal I_g\le8a\|g\|_\mu^2
             +\frac12|\E_h(g^2,\|U\|^2)|.
  \label{eq:energy-caccioppoli-start}
\end{equation}
We record the edge Cauchy--Schwarz calculation, including its
normalization.  For \(v,w\in J\), the bound on \(U\) gives
\begin{equation}
  \bigl|\|U(v)\|^2-\|U(w)\|^2\bigr|
  \le(\|U(v)\|+\|U(w)\|)\|U(v)-U(w)\|
  \le4\|U(v)-U(w)\|.
\end{equation}
It follows that
\begin{equation}
  \begin{aligned}
    |\E_h(g^2,\|U\|^2)|
    &\le2\sum_{v,w\in J}c^h_{vw}
       |g(v)-g(w)|\,|g(v)+g(w)|\,\|U(v)-U(w)\|\\
    &\le2\bigl(2\E_h(g)\bigr)^{1/2}
       \left(\sum_{v,w\in J}c^h_{vw}
          (g(v)+g(w))^2\|U(v)-U(w)\|^2\right)^{1/2}.
  \end{aligned}
  \label{eq:energy-edge-cauchy-schwarz}
\end{equation}
Using \((g(v)+g(w))^2\le2(g(v)^2+g(w)^2)\) and symmetry,
the remaining sum is at most
\begin{equation}
  4\sum_{v,w\in J}c^h_{vw}g(v)^2\|U(v)-U(w)\|^2=8\mathcal I_g.
  \label{eq:energy-edge-square}
\end{equation}
Therefore
\(\frac12|\E_h(g^2,\|U\|^2)|
 \le4\sqrt{\E_h(g)\mathcal I_g}\).
The elementary inequality
\(4\sqrt{\E_h(g)\mathcal I_g}\le \mathcal I_g/2+8\E_h(g)\),
inserted in \eqref{eq:energy-caccioppoli-start}, proves
\begin{equation}
  \mathcal I_g\le16\bigl(\E_h(g)+a\|g\|_\mu^2\bigr).
  \label{eq:energy-caccioppoli}
\end{equation}

Now fix \(x\in\R^d\), put \(f=Fx\), and take \(g=f/h\)
on \(J\).  Restriction to a union of connected components can
only decrease the original norm and energy.  Write \(f_J\)
for the restriction of \(f\) to \(J\), extended by zero.
Then \(\|f_J\|_\pi^2\le\|f\|_\pi^2=\|x\|^2\), and
\begin{equation}
  \E(f_J)\le\E(f)=x^*\Lambda x\le\nu\|x\|^2.
  \label{eq:energy-component-restriction}
\end{equation}
Applying \eqref{eq:energy-ground-state-expansion} to \(g=f/h\)
and using \(|V_h|\le a\), we obtain
\begin{equation}
  \E_h(f/h)
  =\E(f_J)-\sum_{v\in J}\pi(v)V_h(v)f(v)^2
  \le\E(f_J)+a\|f_J\|_\pi^2
  \le(\nu+a)\|x\|^2.
  \label{eq:energy-ground-state-bound}
\end{equation}
Also, \(\|f/h\|_\mu^2=\|f_J\|_\pi^2\le\|x\|^2\).
Substitution into \eqref{eq:energy-caccioppoli} yields
\begin{equation}
  \sum_v\pi(v)f(v)^2\Gamma(U)(v)
   \le16(\nu+2a)\|x\|^2
   =80\nu\|x\|^2,
  \label{eq:energy-gradient-conclusion}
\end{equation}
as required.  In particular, \(x\) has not been restricted to the
column space of \(H\).

Finally, \(HH^*\preceq \Theta\) implies
\(\operatorname{ran}H\subseteq\operatorname{ran}\Theta\).
Let \(\Theta^{\dagger/2}\) denote the positive square root of the
Moore--Penrose inverse, and set \(Q=\Theta^{\dagger/2}H\).
The range inclusion gives \(\Theta^{1/2}Q=H\), and
\begin{equation}
  QQ^*=\Theta^{\dagger/2}HH^*\Theta^{\dagger/2}
       \preceq\operatorname{proj}_{\operatorname{ran}\Theta}
       \preceq I.
  \label{eq:energy-contraction-factor}
\end{equation}
Hence \(Q\) is a contraction and \(U_0=UQ\) on \(J\).
Outside \(J\), \(W=0\) and \(FH=WQ=0\), so the zero extensions
give the same identity there.
\end{proof}

We localize a function \(f\) by multiplying it by a scalar cutoff
\(z_i\); the product \(z_if\) is supported where \(z_i\ne0\).
The next lemma controls the total energy of these localized functions.

\begin{lemma}[Localization identity and energy bound]
\label{lem:localization}
Let \(z=(z_i)\) be a finite vector of real scalar functions on
the vertex set.  Suppose its squared norm
\(s_z(v)=\sum_i z_i(v)^2\) satisfies \(s_z(v)\le1\) at every
vertex, and write \(D_z(v,w)=\|z(v)-z(w)\|^2\) for its squared
variation between two vertices.  Every real scalar function \(f\)
on the vertex set satisfies
\begin{equation}
  \sum_i\E(z_if)
   =\langle s_zf,\Delta f\rangle_\pi
       +\frac12\sum_{v,w}c_{vw}D_z(v,w)f(v)f(w).
  \label{eq:localization-identity}
\end{equation}
Let \(U\) be an embedding supplied by Lemma~\ref{lem:envelope}.
Suppose that, for some \(\kappa\ge0\), every pair of vertices with
\(c_{vw}>0\) satisfies
\begin{equation}
  D_z(v,w)\le \kappa^2\|U(v)-U(w)\|^2.
  \label{eq:energy-mask-lipschitz}
\end{equation}
Then, for every \(x\in\R^d\) and \(f=Fx\), the absolute value of the edge
correction in \eqref{eq:localization-identity} is at most
\(80\kappa^2\nu\|x\|^2\).  In particular,
\begin{equation}
  \sum_i\E(z_iFx)\le(1+80\kappa^2)\nu\|x\|^2.
  \label{eq:energy-localized-bound}
\end{equation}
More generally, let \((U_q)\) be a finite collection of embeddings
supplied by Lemma~\ref{lem:envelope} for the same \(F,\Lambda,\nu\),
with possibly different generating matrices and envelopes.  For
fixed numbers \(\vartheta_q\ge0\), suppose that every pair with \(c_{vw}>0\)
satisfies
\begin{equation}
  D_z(v,w)\le\sum_q \vartheta_q\|U_q(v)-U_q(w)\|^2.
  \label{eq:energy-multiple-envelopes}
\end{equation}
Then every \(x\in\R^d\) satisfies
\begin{equation}
  \sum_i\E(z_iFx)
    \le\left(1+80\sum_q \vartheta_q\right)\nu\|x\|^2.
  \label{eq:energy-multiple-envelope-bound}
\end{equation}
\end{lemma}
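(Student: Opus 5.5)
The identity \eqref{eq:localization-identity} will come from a direct edge expansion, and the energy bound from Lemma~\ref{lem:envelope} through the estimate \eqref{eq:energy-full-space-gradient}.

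\emph{Step 1: the identity.} For each $i$ and each ordered pair $(v,w)$ we have $z_i(v)f(v)-z_i(w)f(w)$. We split its square by the polarization
\[
 (z_i(v)f(v)-z_i(w)f(w))^2
 =\tfrac12(z_i(v)^2+z_i(w)^2)(f(v)-f(w))^2+(z_i(v)-z_i(w))^2f(v)f(w)
 -\tfrac12(z_i(v)^2-z_i(w)^2)(f(v)^2-f(w)^2)+\ldots
\]
This is cleanest to verify by writing $a=z_i(v)$, $b=z_i(w)$, $p=f(v)$, $q=f(w)$. Then $(ap-bq)^2=\tfrac12(a^2+b^2)(p-q)^2+\tfrac12(a^2-b^2)(p^2-q^2)+(a-b)^2pq$, which can be checked by expansion. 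We sum over $i$ and weight by $\tfrac12c_{vw}$. The last term gives the edge correction $\tfrac12\sum c_{vw}D_z(v,w)f(v)f(w)$. The first two terms combine to $\tfrac12\sum_{v,w}c_{vw}\,(f(v)-f(w))\,[s_z(v)f(v)-s_z(w)f(w)]$, since $\tfrac12(a^2+b^2)(p-q)+\tfrac12(a^2-b^2)(p+q)=a^2p-b^2q$. By \eqref{eq:energy} this equals $\E(f,s_zf)=\langle s_zf,\Delta f\rangle_\pi$.

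\emph{Step 2: the main term.} Let $f=Fx$, so that $\Delta f=F\Lambda x$. By Cauchy--Schwarz and $0\le s_z\le1$,
\[
 \langle s_zf,\Delta f\rangle_\pi\le\norm{f}_\pi\norm{\Delta f}_\pi\le\nu\norm x^2 .
\]
Note that positivity of $\langle s_zf,\Delta f\rangle$ is not needed; only the upper bound is used.

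\emph{Step 3: the edge correction, which is the main point.} We bound $|f(v)f(w)|\le\tfrac12(f(v)^2+f(w)^2)$, use symmetry of $c_{vw}$ and $D_z$, and get
\[
 \Bigl|\tfrac12\sum c_{vw}D_zf(v)f(w)\Bigr|\le\tfrac12\sum_{v,w}c_{vw}D_z(v,w)f(v)^2 .
\]
Only pairs with $c_{vw}>0$ contribute, so \eqref{eq:energy-mask-lipschitz} applies.

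The obstacle is that $\Gamma(U)$ carries the weight $h(w)/h(v)$ and sums only over $w\in J$, whereas we need unweighted edge sums. We handle this by symmetrizing before substituting. Writing $f(v)^2$ and $f(w)^2$ with $f=hg$ on $J$, the symmetrized sum $\tfrac12\sum c_{vw}\norm{U(v)-U(w)}^2\,\tfrac12(f(v)^2+f(w)^2)$ should be bounded via the edge term of $\mathcal I_g$, namely $\sum c_{vw}h(v)h(w)g(v)^2\norm{U(v)-U(w)}^2$. However $f(v)^2=h(v)^2g(v)^2$, not $h(v)h(w)g(v)^2$.

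To fix this, I would instead directly use $|f(v)f(w)|=h(v)h(w)|g(v)g(w)|\le\tfrac12h(v)h(w)(g(v)^2+g(w)^2)$. Then
\[
 \Bigl|\tfrac12\sum c_{vw}D_zf(v)f(w)\Bigr|\le\tfrac{\kappa^2}2\sum_{v,w\in J}c^h_{vw}g(v)^2\norm{U(v)-U(w)}^2=\kappa^2\mathcal I_g=\kappa^2\sum_v\pi(v)f(v)^2\Gamma(U)(v).
\]
Edges touching $J^c$ are handled as follows: $J$ is a union of components, and outside $J$ we have $U=0$. The required estimate there is $D_z=0$ on edges with both ends outside $J$, which follows from \eqref{eq:energy-mask-lipschitz} since $U(v)=U(w)=0$; and no positive edge crosses between $J$ and $J^c$. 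By \eqref{eq:energy-full-space-gradient} the correction is therefore at most $80\kappa^2\nu\norm x^2$, and combining with Step 2 gives \eqref{eq:energy-localized-bound}.

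\emph{Step 4: multiple envelopes.} The multiple-envelope version follows in the same way. We bound the correction by $\tfrac12\sum_{v,w}c_{vw}\sum_q\vartheta_q\norm{U_q(v)-U_q(w)}^2|f(v)f(w)|$, treat each $q$ separately with its own $h_q$ and $J_q$ as in Step 3, and obtain $80\vartheta_q\nu\norm x^2$ for each $q$. Pairs where $f(v)f(w)\ne0$ but an endpoint lies outside $J_q$ contribute nothing, because $U_q$ vanishes on both endpoints there.
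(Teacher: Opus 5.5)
Your proof is correct and follows essentially the paper's route. For the identity, your polarization identity produces $\E(f,s_zf)$ directly, whereas the paper substitutes $D_z(v,w)=s_z(v)+s_z(w)-2\langle z(v),z(w)\rangle$ and uses $\sum_w c_{vw}=\pi(v)$. The main term is bounded by Cauchy--Schwarz in both. For the correction, your weighted bound $|f(v)f(w)|\le\frac12h(v)h(w)(g(v)^2+g(w)^2)$ is exactly \eqref{eq:energy-weighted-product}, followed by \eqref{eq:energy-full-space-gradient} and the same handling of zero-envelope components.

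The only blemish is the first display of Step~1, where the $(z_i(v)^2-z_i(w)^2)(f(v)^2-f(w)^2)$ term has the wrong sign and trailing dots; the identity you then state and verify is the correct one.
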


\begin{proof}
Fix a real scalar function \(f\).  For each ordered edge,
expansion gives
\begin{equation}
  \sum_i\bigl(z_i(v)f(v)-z_i(w)f(w)\bigr)^2
  =s_z(v)f(v)^2+s_z(w)f(w)^2
      -2\langle z(v),z(w)\rangle f(v)f(w).
  \label{eq:energy-localization-expansion}
\end{equation}
Substitute the identity
\begin{equation}
  D_z(v,w)=s_z(v)+s_z(w)-2\langle z(v),z(w)\rangle.
  \label{eq:energy-mask-distance}
\end{equation}
Multiply the resulting identity by \(c_{vw}/2\) and sum over all ordered pairs.
Interchange \(v\) and \(w\) in the terms
containing \(s_z(w)\). Since \(c_{vw}=c_{wv}\), these give the same
sums as the corresponding terms containing \(s_z(v)\).
Also, \(\sum_w c_{vw}=\pi(v)\sum_wP(v,w)=\pi(v)\).
Consequently,
\[
  \begin{aligned}
  &\sum_i\E(z_if)
       -\frac12\sum_{v,w}c_{vw}D_z(v,w)f(v)f(w)\\
  &\qquad=\sum_{v,w}c_{vw}s_z(v)f(v)\bigl(f(v)-f(w)\bigr)\\
  &\qquad=\sum_v\pi(v)s_z(v)f(v)
       \left(f(v)-\sum_wP(v,w)f(w)\right)\\
  &\qquad=\langle s_zf,\Delta f\rangle_\pi.
  \end{aligned}
\]
The last equality uses \(\Delta=I-P\). This proves
\eqref{eq:localization-identity} without requiring \(s_z=1\).

Now fix \(x\in\R^d\) and let \(f=Fx\).  The first term in
that identity is bounded by
\begin{equation}
  \langle s_zf,\Delta f\rangle_\pi
   \le\|s_zf\|_\pi\|\Delta f\|_\pi
   \le\|Fx\|_\pi\|F\Lambda x\|_\pi
   \le\nu\|x\|^2.
  \label{eq:energy-localization-main-term}
\end{equation}
This estimate uses \(0\le s_z\le1\) and the norm bound on
\(\Lambda\); it requires no commutation of scalar multiplication
with \(\Delta\).

On a component where \(h=0\),
\eqref{eq:energy-mask-lipschitz} forces \(D_z=0\) on every
edge.  Thus the edge correction receives no contribution from
such components.  For \(v,w\in J=\{h>0\}\), the scalar inequality
\begin{equation}
  |f(v)f(w)|
  \le\frac12\left(
       \frac{h(w)}{h(v)}f(v)^2
       +\frac{h(v)}{h(w)}f(w)^2\right)
  \label{eq:energy-weighted-product}
\end{equation}
and edge symmetry give
\begin{equation}
  \begin{aligned}
  \frac12\sum_{v,w}c_{vw}D_z(v,w)|f(v)f(w)|
  &\le\frac{\kappa^2}{4}\sum_{v,w\in J}c_{vw}
       \|U(v)-U(w)\|^2
       \left(\frac{h(w)}{h(v)}f(v)^2
            +\frac{h(v)}{h(w)}f(w)^2\right)\\
  &=\kappa^2\sum_v\pi(v)f(v)^2\Gamma(U)(v)\\
  &\le80\kappa^2\nu\|x\|^2.
  \end{aligned}
  \label{eq:energy-absolute-correction}
\end{equation}
The absolute value of the signed edge correction is bounded by
the left side.  Combining this with
\eqref{eq:energy-localization-main-term} proves
\eqref{eq:energy-localized-bound}.

Under \eqref{eq:energy-multiple-envelopes}, first bound the
absolute correction by
\begin{equation}
  \sum_q\frac{\vartheta_q}{2}\sum_{v,w}c_{vw}
                 \|U_q(v)-U_q(w)\|^2|f(v)f(w)|.
  \label{eq:energy-separate-corrections}
\end{equation}
For each \(q\), apply
\eqref{eq:energy-weighted-product} with that embedding's own
envelope \(h_q\), and then use
\eqref{eq:energy-full-space-gradient}.  On a zero-envelope
component the corresponding squared difference is zero, so
no ratio of vanishing envelope values occurs.  Each summand is
at most \(80\vartheta_q\nu\|x\|^2\), which proves
\eqref{eq:energy-multiple-envelope-bound}.
\end{proof}

\section{One disjoint stage}\label{sec:stage}

Fix the original eigenspace dimension \(d\ge2\). Set
\(N_*=1+\lceil\log_2d\rceil\), which will bound the number of stages,
and \(\tau=1/8\). The transition-width parameter, covariance-tail
tolerance, and covariance-loss tolerance are, respectively,
\begin{equation}\label{stage-parameters}
 \sigma=\frac1{128eN_*},\qquad
 \delta=\frac1{32N_*\log(2d)},\qquad
 \varepsilon=\frac1{8N_*}.
\end{equation}
In particular \(0<\sigma,\delta<1/4\). Recall that
\(\tail(\Sigma)=\tr\Sigma-\lambda_{\max}(\Sigma)\) denotes the rank-one trace tail
of a positive semidefinite matrix.

\begin{proposition}[One disjoint stage]\label{prop:stage}
Let \(Z\in\mathbb R^{d\times\ell}\) satisfy \(Z^*Z=I_\ell\), where
\(1\le\ell\le d\), and put \(Y=FZ\). There are finitely many nonnegative
cutoff functions \(\chi_i\), a nonnegative remainder cutoff \(\beta\), and
an embedding \(U\) supplied by Lemma~\ref{lem:envelope}, such that
the supports of the cutoffs are pairwise disjoint and
\begin{align}
 \sum_i\chi_i^2+\beta^2&\le1,
 &Y^*\left(1-\sum_i\chi_i^2-\beta^2\right)Y
   &\preceq\varepsilon I_\ell,\label{stage-loss}\\
 \tr(Y^*\beta^2Y)&\le\tau\ell,
 &\tail(Y^*\chi_i^2Y)&\le\delta.\label{stage-buffer-tail}
\end{align}
The vector of cutoffs \((\chi_i,\beta)\) is \(\kappa\)-Lipschitz in \(U\):
for every \(v,w\in V\),
\[
 \sum_i|\chi_i(v)-\chi_i(w)|^2+|\beta(v)-\beta(w)|^2
 \le \kappa^2\|U(v)-U(w)\|^2.
\]
Its Lipschitz constant satisfies
\begin{equation}\label{stage-lipschitz}
 \kappa\le C[1+\log(d+1)]^{7/2}
\end{equation}
for an absolute constant \(C\). These conclusions hold for every
\(Z\) with orthonormal columns; its range need not be invariant under \(\Lambda\).
\end{proposition}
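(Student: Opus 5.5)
We would build the embedding by applying Lemma~\ref{lem:envelope} with \(H=Z\), so \(r=\ell\), \(\|H\|_{\mathrm F}^2=\ell\), and \(U_0=Y/h=UQ\) with \(Q\) a contraction. Then \(\|U_0(v)-U_0(w)\|\le\|U(v)-U(w)\|\). It therefore suffices to make the cutoffs Lipschitz in \(U_0\), or in any contraction of \(U\). The pointwise bound \(\|U\|\le2\) places the normalized points in a ball of radius two. The geometry will take place in a random low-dimensional Gaussian projection \(\Gamma U_0\), where \(\Gamma\) is a \(k_0\times\ell\) Gaussian matrix scaled by \(k_0^{-1/2}\) with \(k_0\asymp\log(2d)/\delta\) up to polylogarithmic factors. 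For the covariance statements we always weigh a vertex by \(\pi(v)h(v)^2\), since \(Y^*gY=\sum_v\pi(v)h(v)^2g(v)U_0(v)^*U_0(v)\). The total mass is \(\|h\|_\pi^2\le2\ell\), and the total covariance is \(Y^*Y=I_\ell\).

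The construction has two layers. \emph{Directions:} we split the normalized directions \(U_0(v)/\|U_0(v)\|\) into cells of angular diameter about \(\sqrt\delta\) after projection. We use a randomly translated grid of side about \(\sqrt{\delta}/\sqrt{k_0}\) on the projected sphere, or equivalently on the projected points themselves, with a radial shell decomposition. For a cell whose points \(U_0(v)\) lie within angle \(\theta\) of a unit vector \(u\), the covariance satisfies \(\tail\le\theta^2\cdot\)(trace). This gives \(\tail(Y^*\chi_i^2Y)\le\delta\) once the cell trace is at most a constant. \emph{Mass:} if a cell's trace exceeds one, we cannot split it by direction. Its mass is then nearly rank one, so we assign its bulk to the remainder \(\beta\). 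Since \(\sum_i\Sigma_i\preceq I\), at most \(8\) cells of trace above \(\ell/8\)\ldots{} In the simpler route, we send all cells with trace above a threshold into \(\beta\) and check \(\tr(Y^*\beta^2Y)\le\tau\ell\). We may need to subdivide heavy cells radially and keep only pieces of bounded trace, so that the remainder trace stays below \(\ell/8\).

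The cutoffs themselves are independent random soft thresholds. In each cell \(i\) we pick a random scale and set \(\chi_i(v)=\psi(\mathrm{dist}(\Gamma U_0(v),\partial\text{cell})/\sigma')\), with transition width \(\sigma\) measured relative to the cell size. The transition layers go to neither \(\chi_i\) nor \(\beta\). This makes the supports disjoint and \(\sum\chi_i^2+\beta^2\le1\). The loss matrix of cell \(i\) is \(M_i=Y^*(1-\chi_i^2)\mathbf 1_{\text{cell }i}Y\), which satisfies \(0\preceq M_i\preceq\Sigma_i\). Averaging over the random translation and threshold gives \(\Ex M_i\preceq\alpha\Sigma_i\) with \(\alpha\asymp\sigma\cdot\)(number of boundary faces met), which is \(O(1/N_*)\) after choosing constants. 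We then apply Lemma~\ref{lem:selection} with this \(\delta\). It yields a realization with total loss at most \(4e\alpha+2\delta\log(2\ell)\le1/(16N_*)+1/(16N_*)=\varepsilon\). This explains the specific choice \(\sigma=1/(128eN_*)\). The randomness of the grid must be independent across cells, which is arranged by using per-cell random thresholds after fixing the grid.

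The Lipschitz constant comes from three factors: the Gaussian projection contributes \(\sqrt{k_0}\) through the inverse cell size, \(1/\sigma\) comes from the transition width, and the angular normalization divides by \(\|U_0\|\), which is handled by the radial shells. Tallying gives roughly \(\sqrt{\log d/\delta}\cdot N_*\cdot\log\), that is, a power \(7/2\) of \(\log\). Only boundedly many cells are active at a point, so the squared sum stays controlled. The main obstacle is the interaction between the angular cells and the small-norm region. Near \(U_0=0\), directions are unstable, so we must let those vertices fall into transition zones or into \(\beta\). We must then show that their covariance is small, or that it is absorbed in \(\beta\), without violating \(\tr(Y^*\beta^2Y)\le\ell/8\). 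Our first attempt would be to use \(\|h\|_\pi^2\le2\ell\) to bound the covariance carried by vertices with \(\|U_0\|\le c\) by \(2c^2\ell\), putting them into \(\beta\) with \(c=1/4\).
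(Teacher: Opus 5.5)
Your skeleton (Gaussian projection, translated grid, independent per-cell thresholds, Lemma~\ref{lem:selection} with $\alpha\asymp\sigma$) matches the paper's. However, your first step breaks the Lipschitz bound. You build $U$ from $H=Z$ and then place the grid in $\Gamma U_0$, where $\Gamma$ is a $k_0\times\ell$ Gaussian matrix scaled by $k_0^{-1/2}$. That map is not a contraction: $\norm{\Gamma}\approx1+\sqrt{\ell/k_0}$. So cutoffs that are $L$-Lipschitz in $\Gamma U_0$ are only $L\norm{\Gamma}$-Lipschitz in $U$, and nothing compensates. At the first stage, where $\ran Z=\R^d$, one even has $QQ^*\succeq\frac34I$. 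With $\ell$ up to $d$ and $k_0$ polylogarithmic, this is a polynomial loss. Renormalizing $\Gamma$ shrinks the projected configuration by the same factor. A union bound over edges is unavailable because $|V|$ is unrelated to $d$. The paper instead puts the projection inside the envelope: Lemma~\ref{lem:envelope} is applied to $H=Z\Omega/\sqrt r$, so the projected field $U_0=FH/h$ itself equals $UQ$ with $\norm Q\le1$. This costs nothing in energy, since \eqref{eq:energy-full-space-gradient} holds on all of $\R^d$ for every $H$. The only price is $\norm h_\pi^2\le2\norm H_{\mathrm F}^2$. That is why $\Omega$ must satisfy \eqref{gauss-frobenius}, and why \eqref{gauss-norm-bound} is needed to bound the target mass where $\norm{U_0}$ is small.

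The covariance-tail step has a second gap. A projected cell is not an angular cap of the original directions $\widehat y_v$. No projection to polylogarithmic dimension preserves all angles among unboundedly many rows, so $\tail(\Sigma)\le\theta^2\tr\Sigma$ is unavailable for projected cells; your unresolved handling of heavy cells is a symptom of this. The missing ingredient is a weighted certificate combined with Lemma~\ref{cell-far-pairs}. With $\sin^2\theta=\delta/4$, choose $\Omega$ so that the $m_vm_w$-weight of far pairs whose projective angle drops below $\theta/2$ is at most $\delta^2/2$. Rows in one active cell have projective angle below $\theta/2$, so every far pair inside a cell is counted. Any vertex set with far-pair weight at most $\delta^2/2$ has tail at most $\delta$ with no trace hypothesis, because isotropy caps every near cap at mass $4/3$; heavy cells never occur. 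The certificate needs only constant angular distortion, so $r=O(\log(\ell/\delta))$ suffices. Your $k_0\asymp\log(2d)/\delta$ would push $\kappa\asymp r^{3/2}/(\sigma\sqrt\delta)$ well past the power $7/2$. Your tally also omits the factor $r$ lost in the collar width $\omega=\tau b/(48r)$.

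Finally, the remainder and loss bookkeeping does not close. Cutting at $\norm{U_0}\le1/4$ already spends the whole budget $\tau\ell=\ell/8$ before the radial transition and the face collars are counted. Your $M_i=Y^*(1-\chi_i^2)\one_{\text{cell }i}Y$ counts the deterministic remainder as loss, so $\Ex M_i\preceq\alpha\Sigma_i$ fails for cells with mass near their faces. Also, $\alpha\asymp\sigma\cdot(\text{number of faces})$ is incompatible with $\sigma=1/(128eN_*)$. In the paper, the collars and a small origin ball go into $\beta$; its mass needs only a trace bound, which the shared random shift provides. The loss $\zeta=1-(2q_i-1)^2$ comes only from independent per-cell thresholds on the two scalars $d_\partial$ and $\norm y$, so $\Ex\zeta\le2\sigma$ in every dimension.
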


The cutoffs take values in \([0,1]\). Multiplication by \(\chi_i\)
selects a local piece of \(Y\), while \(\beta Y\) is the remainder
used in subsequent stages. The loss of squared norm is measured
separately by \(1-\sum_i\chi_i^2-\beta^2\).

We first obtain a low-dimensional projection that preserves most
weighted pairs of directions. For every vertex \(v\), define
\(m_v=\pi(v)\|Y(v)\|^2\). For those vertices with \(Y(v)\ne0\), set
\[
 \widehat y_v=\frac{Y(v)}{\|Y(v)\|}.
\]
Rows with \(Y(v)=0\) have zero target mass and are omitted from
sums involving \(\widehat y_v\). Since \(Y^*Y=I_\ell\), these definitions give
\begin{equation}\label{gauss-isotropy}
 \sum_v m_v \widehat y_v^*\widehat y_v=I_\ell,\qquad \sum_v m_v=\ell.
\end{equation}
Choose the angular threshold and the projection dimension as
\begin{equation}\label{gauss-parameters}
 \theta=\arcsin\sqrt{\delta/4}
 \qquad\text{and}\qquad
 r=\left\lceil\max\left\{
     512\log\frac{64\ell^2}{\delta^2},
     32\log\frac{32}{\tau}\right\}\right\rceil.
\end{equation}
An ordered pair \((v,w)\) with nonzero target rows is \emph{far} if
\(\lvert \widehat y_v\cdot \widehat y_w\rvert\le\cos\theta\), and \emph{near} otherwise.
The projective angle between two nonzero vectors is the smaller angle
between the lines they span, and lies in \([0,\pi/2]\).

\begin{lemma}[Weighted Gaussian projection]\label{gauss-certificates}
There is an \(\ell\times r\) real matrix \(\Omega\) such that
\begin{align}
 &\sum_{\substack{(v,w)\ {\rm far}\\
       \angle_{\rm proj}(\widehat y_v\Omega ,\widehat y_w\Omega )<\theta/2}}m_vm_w
       \le\delta^2/2,\label{gauss-pair-bound}\\
 &\sum_{\|Y(v)\Omega /\sqrt r\|^2<\|Y(v)\|^2/2}m_v
       \le\tau\ell/4.\label{gauss-norm-bound}
\end{align}
The projected generating matrix also satisfies
\begin{equation}\label{gauss-frobenius}
 \|Z\Omega /\sqrt r\|_{\mathrm F}^2\le4\ell.
\end{equation}
Moreover, no nonzero row of \(Y\) projects to zero. In
\eqref{gauss-pair-bound}, zero projected rows may equivalently be
counted as angular failures before this final condition is imposed.
\end{lemma}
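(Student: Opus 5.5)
Take \(\Omega\) to be an \(\ell\times r\) matrix with i.i.d.\ standard Gaussian entries and show that each of the three failure events has small expected weight. Then Markov's inequality and a union bound give a realization where all three bounds hold simultaneously. Apply Markov to each with slack: aim to make each failure probability at most \(1/4\), so the intersection has probability at least \(1/4>0\). For a unit vector \(u\in\R^\ell\) (a row), \(u\Omega\) is a standard Gaussian vector in \(\R^r\). For two rows \(\widehat y_v,\widehat y_w\), the pair \((\widehat y_v\Omega,\widehat y_w\Omega)\) is a pair of \(r\)-dimensional Gaussian vectors whose coordinates are i.i.d.\ copies of a bivariate normal with correlation \(\widehat y_v\cdot\widehat y_w\).

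\emph{Pair bound.} For a far pair, the underlying angle is at least \(\theta\). The event that the projected projective angle drops below \(\theta/2\) is a large-deviation event for the ratio of \(\|P^\perp\|\) to \(\|P\|\) in the two-dimensional span. Concretely, write \(\widehat y_w=\cos\psi\,\widehat y_v+\sin\psi\,e\) with \(e\perp\widehat y_v\) and \(\sin\psi\ge\sin\theta\). Then \(g=\widehat y_v\Omega\) and \(g'=e\Omega\) are independent standard Gaussians in \(\R^r\). The projected sine of the angle is \(\sin\psi\) times the sine between \(g\) and \(g'\), corrected by norm distortion. By \(\chi^2_r\) concentration, \(\|g\|^2,\|g'\|^2\in[r/2,2r]\) and \(|g\cdot g'|\le r/4\) except with probability \(e^{-cr}\). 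On that event the distortion is by constant factors, so the projected sine is at least \(c\sin\psi\ge \sin(\theta/2)\) after checking constants. Since \(\sin\theta=\sqrt{\delta}/2\) is small, \(\sin(\theta/2)\approx\sin\theta/2\), and the constant margin is what needs care. Thus each far pair fails with probability at most \(e^{-r/512}\le\delta^2/(64\ell^2)\) by the choice of \(r\). Summing against \(\sum_{v,w}m_vm_w=\ell^2\) gives expected failing weight at most \(\delta^2/64\), and Markov gives at most \(\delta^2/2\) with probability \(\ge 31/32\). Zero projected rows are included as failures here. Since they occur with probability zero for Gaussian \(\Omega\), the final nonvanishing condition holds almost surely.

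\emph{Norm bound.} For each \(v\), \(\|Y(v)\Omega\|^2/\|Y(v)\|^2\sim\chi^2_r\). The lower-tail Chernoff bound gives \(\Prob(\chi^2_r<r/2)\le e^{-r/16}\le e^{-r/32}\le\tau/32\). The expected failing mass is therefore at most \(\tau\ell/32\) by \eqref{gauss-isotropy}, and Markov gives \(\tau\ell/4\) with probability \(\ge7/8\).

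\emph{Frobenius bound.} \(\Ex\|Z\Omega\|_{\mathrm F}^2=r\|Z\|_{\mathrm F}^2=r\ell\), so Markov gives \(\|Z\Omega/\sqrt r\|_{\mathrm F}^2\le4\ell\) with probability \(\ge3/4\). The total failure probability is \(<1/32+1/8+1/4<1\), so a good \(\Omega\) exists.

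The main obstacle is the pair estimate: deriving a clean \(e^{-r/512}\)-type bound for the angle-shrinkage event uniformly over far pairs, with explicit constants compatible with \(\theta=\arcsin\sqrt{\delta/4}\). I would handle it by the two-dimensional reduction above, bounding \(|g\cdot g'|/(\|g\|\|g'\|)\) and the norm ratios by standard Gaussian tail and Bernstein bounds for \(\chi^2\) and for the Gaussian inner product. The projective (absolute-value) version follows since the bound on \(|\cos|\) is symmetric.
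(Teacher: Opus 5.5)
Your route is the paper's: a Gaussian $\Omega$, a two-dimensional reduction for each far pair, $\chi^2_r$ tails, expected failing weights computed with $m_vm_w$ and $m_v$, Markov's inequality plus a union bound, and almost-sure nonvanishing of the projected rows. The norm and Frobenius parts are correct as written.

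The pair step, however, does not close with the constants you state. In your notation the projected sine is $\sin\psi\,\sqrt{\norm{g}^2\norm{g'}^2-(g\cdot g')^2}\,/\,(\norm{g}\,\norm{\cos\psi\,g+\sin\psi\,g'})$. Consider the configuration $\psi=\theta$, $\norm{g}^2=2r$, $\norm{g'}^2=r/2$, $g\cdot g'=r/4$. It lies in your good event. Its projected sine is at most $\frac{\sqrt{15}}{8\cos\theta}\sin\theta$. This is smaller than $\sin(\theta/2)=\sin\theta/(2\cos(\theta/2))$ because $\sin\theta<1/4$. The required factor $1/(2\cos(\theta/2))$ exceeds $1/2$. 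So the distortion must be controlled to relative accuracy about $1/8$, not merely up to factors of $2$. This is exactly what the constant $512=8\cdot 8^2$ in $r$ is calibrated for.

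The paper applies $\Prob\{|\chi^2_r/r-1|>1/8\}\le2e^{-r/512}$ to the four unit vectors $e_1,e_2,(e_1\pm e_2)/\sqrt2$ of the pair's span; your $\widehat y_v$ and $e$ would serve as $e_1,e_2$. It recovers the off-diagonal Gram entry by polarization. The projected Gram matrix then has eigenvalues in $[3/4,5/4]$. Hence areas are multiplied by at least $3/4$ and products of lengths by at most $5/4$, giving a sine factor of $3/5$. Finally, $\frac35\sin\theta\ge\sin(\theta/2)$ since $\theta<\pi/6$.

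The price is a per-pair failure probability of $8e^{-r/512}$ rather than $e^{-r/512}$. The expected failing weight is therefore at most $8\ell^2e^{-r/512}\le\delta^2/8$, not $\delta^2/64$, and Markov gives failure probability $1/4$, not $1/32$. The union bound still closes, since $1/4+1/8+1/4<1$. With the pair step redone at this precision, your argument goes through.
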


\begin{proof}
Choose the entries of \(\Omega\) independently from the standard normal
distribution. Here $\chi_r^2$ denotes a chi-square random variable
with $r$ degrees of freedom. Its moment-generating function
\(\mathbb E e^{t\chi_r^2}=(1-2t)^{-r/2}\) for \(t<1/2\), together with the
exponential Markov inequality, gives, for \(0<s<1\),
\begin{equation}\label{gauss-chi-tail}
 \mathbb P\{|\chi_r^2/r-1|>s\}\le2e^{-rs^2/8}.
\end{equation}
Indeed, optimizing the parameter separately in the two tails gives
exponents
\(-r[s-\log(1+s)]/2\) and
\(-r[-s-\log(1-s)]/2\). Both bracketed expressions are at least
\(s^2/4\): integrate \(t/(1+t)\ge t/2\) for the upper tail and
\(t/(1-t)\ge t\) for the lower tail, over \(0\le t\le s\).

For a fixed two-dimensional subspace of \(\mathbb R^\ell\), let
\(e_1,e_2\) be an orthonormal basis. Apply \eqref{gauss-chi-tail}, with
\(s=1/8\), to the four unit vectors
\(e_1,e_2,(e_1+e_2)/\sqrt2,(e_1-e_2)/\sqrt2\). Except on an event of probability
at most \(8e^{-r/512}\), all four squared norms after projection by
\(\Omega/\sqrt r\) differ from one by at most \(1/8\). The Gram matrix of
\(e_1\Omega /\sqrt r,e_2\Omega /\sqrt r\) then has diagonal errors at most \(1/8\).
Its off-diagonal entry is
\[
 \left\langle\frac{e_1\Omega }{\sqrt r},\frac{e_2\Omega }{\sqrt r}\right\rangle
 =\frac12\left(
   \left\|\frac{(e_1+e_2)\Omega }{\sqrt{2r}}\right\|^2
  -\left\|\frac{(e_1-e_2)\Omega }{\sqrt{2r}}\right\|^2\right),
\]
whose magnitude is at most \(1/8\). The two Gram eigenvalues
therefore lie in \([3/4,5/4]\).

For any two vectors in this subspace, the area of the parallelogram
they span after projection is at least \(3/4\) times its original area,
while the product of projected
lengths is at most \(5/4\) times the original product. Thus the sine
of their projected angle is at least \(3/5\) times its original
value. If the original pair is far, its projective angle is at
least \(\theta\). Since \(\theta<\pi/6\),
\[
 \frac35\sin\theta\ge\sin(\theta/2),
\]
so its projective angle after projection is at least \(\theta/2\).
Summing the failure probabilities with the weights in
\eqref{gauss-isotropy} gives
\begin{equation}\label{gauss-expectations}
 \begin{aligned}
 \mathbb E\!\sum_{\substack{(v,w)\ {\rm far}\\
       \angle_{\rm proj}(\widehat y_v\Omega ,\widehat y_w\Omega )<\theta/2}}m_vm_w
       &\le8\ell^2e^{-r/512}\le\delta^2/8,\\
 \mathbb E\!\sum_{\|Y(v)\Omega /\sqrt r\|^2<\|Y(v)\|^2/2}m_v
       &\le2\ell e^{-r/32}\le\tau\ell/16,\\
 \mathbb E\|Z\Omega /\sqrt r\|_{\mathrm F}^2&=\ell.
 \end{aligned}
\end{equation}
The second estimate uses \eqref{gauss-chi-tail} with \(s=1/2\);
the third uses \(Z^*Z=I_\ell\). When \(\ell=1\), there are no far
pairs, so the first estimate is immediate.

Each expectation in \eqref{gauss-expectations} is at most one
quarter of its required upper bound in
\eqref{gauss-pair-bound}--\eqref{gauss-frobenius}.
Markov's inequality and a union bound over the three failure events
therefore give simultaneous success probability at least \(1/4\).
For each fixed nonzero row of \(Y\), its image under \(\Omega\) is a
nondegenerate Gaussian vector and is zero with probability zero.
There are finitely many such rows, so excluding all these zero
images preserves the success probability. The chosen value of
\(r\) has no dependence on the number of vertices.
\end{proof}

The next observation converts the pair estimate into a statement
about every raw cell, before its cutoff is chosen.

\begin{lemma}[Covariance tail forces far pairs]\label{cell-far-pairs}
With the notation of \eqref{gauss-isotropy}, let \(B\) be any set of vertices
with \(Y(v)\ne0\), and put \(\Sigma=\sum_{v\in B}m_v\widehat y_v^*\widehat y_v\).
If \(\tail(\Sigma)>\delta\), then
\[
 \sum_{\substack{v,w\in B\\ |\widehat y_v\cdot \widehat y_w|\le\cos\theta}}
       m_vm_w>\delta^2/2.
\]
\end{lemma}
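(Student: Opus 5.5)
We prove the contrapositive in the form of a direct lower bound on the far-pair mass. Write \(\Sigma=\sum_{v\in B}m_v\widehat y_v^*\widehat y_v\) and let \(u\) be a unit top eigenvector of \(\Sigma\). The key quantity is
\[
 \sum_{v,w\in B}m_vm_w\bigl(1-(\widehat y_v\cdot\widehat y_w)^2\bigr)
 =(\tr\Sigma)^2-\tr(\Sigma^2)
 =\sum_{j\ne k}\mu_j\mu_k,
\]
where \(\mu_1\ge\mu_2\ge\cdots\) are the eigenvalues of \(\Sigma\). This identity holds because \(\sum_{v,w}m_vm_w(\widehat y_v\cdot\widehat y_w)^2=\tr(\Sigma^2)\) and \(\sum_{v,w}m_vm_w=(\tr\Sigma)^2\).

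The plan is to bound this quantity from below by \(\tail(\Sigma)\), and from above in terms of the far-pair mass.

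For the lower bound, \(\sum_{j\ne k}\mu_j\mu_k\ge 2\mu_1(\tr\Sigma-\mu_1)\ge 2\mu_1\tail(\Sigma)\). It is also at least \((\tr\Sigma-\mu_1)^2-\sum_{j\ge2}\mu_j^2\ge0\), but the cleaner route uses \(\mu_1\ge\mu_j\). Since \(\mu_1\ge\mu_2\) and \(\tail(\Sigma)=\sum_{j\ge2}\mu_j\), we get
\[
 \sum_{j\ne k}\mu_j\mu_k\ge 2\mu_1\tail\ge 2\Bigl(\sum_{j\ge2}\mu_j^2\Bigr)\ge\ldots
\]
What we actually want is a bound like \(\ge\tail(\Sigma)^2\). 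This holds because \(\sum_{j\ne k}\mu_j\mu_k\ge\sum_{j\ne k,\ j,k\ge2}\mu_j\mu_k+2\mu_1\tail\). If \(\mu_1\ge\tail/2\), this is at least \(\tail^2\). Otherwise \(\mu_2\le\mu_1<\tail/2\), so every \(\mu_j\) with \(j\ge2\) is below \(\tail/2\). Then \(\sum_{j\ne k\ge2}\mu_j\mu_k=\tail^2-\sum_{j\ge2}\mu_j^2\ge\tail^2-\tfrac{\tail}{2}\tail=\tail^2/2\), and adding the \(2\mu_1\tail\ge 2\mu_2\tail\) contribution should restore roughly \(\tail^2\). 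I will settle the constant there, aiming for \(\ge c\,\tail^2\) with \(c\) close to \(1\). We then have \(>c\delta^2\).

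For the upper bound, split the pairs into near and far. A near pair has \(1-(\widehat y_v\cdot\widehat y_w)^2<\sin^2\theta=\delta/4\), so the near pairs contribute less than \((\delta/4)(\tr\Sigma)^2\). Far pairs contribute at most their mass, since \(1-(\cdot)^2\le1\). Hence the far-pair mass exceeds \(c\,\tail^2-(\delta/4)(\tr\Sigma)^2\).

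\textbf{The main obstacle} is the factor \((\tr\Sigma)^2\), which can be as large as \(\ell^2\). Absorbing it requires the identity itself rather than this crude split. The fix is to compare with \(\mu_1\). Write \(\sum_{v,w}m_vm_w(1-(\widehat y_v\cdot\widehat y_w)^2)\). Using \(\sum_{j\ne k}\mu_j\mu_k\ge2\mu_1\tail\), the lower bound scales like \(\mu_1\tail\), while the near-pair error scales like \(\delta(\tr\Sigma)^2\approx\delta(\mu_1+\tail)^2\).

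The intended remedy is to weight each pair by its \(u\)-mass. Restrict the near-pair count to pairs where both vectors are essentially aligned with \(u\), since only those carry \(\mu_1^2\) mass. Pairs in which one vector has a substantial component orthogonal to \(u\) are controlled by \(\tail\), because
\[
 \sum_v m_v\bigl(1-(\widehat y_v\cdot u)^2\bigr)=\tail(\Sigma).
\]
Concretely, partition \(B\) into \(B_0=\{v:1-(\widehat y_v\cdot u)^2\le\delta/16\}\) and its complement, whose mass is at most \(16\tail/\delta\). The steps are then as follows.
\begin{enumerate}
 \item Pairs within \(B_0\) are near, by the triangle inequality for projective angles, and contribute little to the deficit.
 \item The mass of \(B_0^c\) is small relative to \(\tail\).
 \item Far pairs must include enough pairs between \(B_0^c\) and the rest, or within \(B_0^c\), to account for the deficit \(\ge\tail^2>\delta^2\).
\end{enumerate}
I would try to balance these three estimates to reach the threshold \(\delta^2/2\). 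This balancing is the step I expect to require the most care with constants.
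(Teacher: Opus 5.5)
Your proposal never uses the operator bound $\Sigma\preceq I_\ell$ that comes from the isotropy $\sum_v m_v\widehat y_v^*\widehat y_v=I_\ell$. Without that bound the statement is false, even if $\tr\Sigma\le\ell$ is granted, so no balancing of your three estimates can succeed. Take $\ell\ge128$ and let $B$ consist of two rows $\widehat y_1=e_1$ and $\widehat y_2=\cos\psi\,e_1+\sin\psi\,e_2$ with $\psi=\theta/2$, each of mass $64$. Every pair is near, so the far-pair weight is $0$. Yet $\tail(\Sigma)=64(1-\cos\psi)>64\delta/33>\delta$.

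The same example shows where your plan breaks. The top eigenvector $u$ bisects the two rows, and both rows lie in $B_0$. So the whole deficit $m^2-\tr(\Sigma^2)\ge m\,\tail(\Sigma)$, with $m=\tr\Sigma$, sits on near pairs inside $B_0$, and the claim in step~1 that such pairs contribute little is false. In general, the deficit on pairs inside $B_0$ can be of the same order $m\,\tail(\Sigma)$ as your lower bound. Step~2 is also reversed: the identity $\sum_v m_v(1-(\widehat y_v\cdot u)^2)=\tail(\Sigma)$ only shows that $B_0^c$ has mass at most $16\tail(\Sigma)/\delta$, which is large rather than small.

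The missing idea, as in the paper, is a split on $m=\tr\Sigma$.

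For $m\le2$, your near/far split already works once you replace $c\,\tail(\Sigma)^2$ by the sharper bound $m^2-\tr(\Sigma^2)\ge m(m-\lambda_{\max}(\Sigma))=m\,\tail(\Sigma)>m\delta$. This follows from $\tr(\Sigma^2)\le\lambda_{\max}(\Sigma)\,m$. Near pairs then cost at most $(\delta/4)m^2\le m\delta/2$. So the far weight exceeds $m\delta/2>\delta^2/2$, because $m\ge\tail(\Sigma)>\delta$.

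For $m>2$, isotropy is the essential input. The inequality $\cos^2\theta\sum_{|\widehat y\cdot\widehat y_v|>\cos\theta}m_v\le\sum_v m_v(\widehat y\cdot\widehat y_v)^2=1$ bounds the mass of every near cap by $1/\cos^2\theta\le4/3$. Hence each $v\in B$ has far partners in $B$ of total mass at least $m-4/3$, and the far weight is at least $m(m-4/3)>\delta^2/2$. This cap-mass bound is exactly the control on $(\tr\Sigma)^2$ that you identified as the main obstacle but did not supply.
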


\begin{proof}
Write \(m=\tr \Sigma\). If \(m\le2\), then
\[
 \sum_{v,w\in B}m_vm_w[1-(\widehat y_v\cdot \widehat y_w)^2]
 =m^2-\tr(\Sigma^2)
 \ge m(m-\lambda_{\max}\Sigma)>m\delta,
\]
because each eigenvalue of \(\Sigma\) is at most
\(\lambda_{\max}(\Sigma)\). For a near pair the factor
\(1-(\widehat y_v\cdot \widehat y_w)^2\) is at most \(\sin^2\theta=\delta/4\).
The near-pair contribution is therefore at most
\((\delta/4)m^2\le m\delta/2\). For a far pair that factor is at
most one, so the far-pair weight exceeds
\(m\delta/2>\delta^2/2\), using \(m\ge \tail(\Sigma)>\delta\).

If \(m>2\), isotropy shows that the mass of the full row family in a
near cap about any unit vector \(\widehat y\) is at most \(1/\cos^2\theta\):
\[
 \cos^2\theta
 \sum_{|\widehat y\cdot \widehat y_v|>\cos\theta}m_v
 \le\sum_v m_v(\widehat y\cdot \widehat y_v)^2=1.
\]
Since \(\delta<1/4\), the cap mass is at most \(4/3\).
For each \(v\in B\), the far rows within \(B\) thus have total
mass at least \(m-4/3\). Multiply by \(m_v\) and sum over \(v\in B\)
to obtain far-pair weight at least \(m(m-4/3)>\delta^2/2\).
\end{proof}

\begin{proof}[Proof of Proposition~\ref{prop:stage}]
Fix \(\Omega\) from Lemma~\ref{gauss-certificates}, and apply
Lemma~\ref{lem:envelope} with the generating matrix \(H=Z\Omega /\sqrt r\).
Let \(h\) and \(U\) be the envelope and embedding it supplies.
Set \(U_0=FH/h\) where \(h>0\), with \(U_0=0\) elsewhere.
The lemma supplies a contraction \(Q\), and
\eqref{eq:envelope-properties} together with
\eqref{gauss-frobenius} gives
\begin{equation}\label{stage-envelope}
 U_0=UQ,\qquad \|Q\|\le1,\qquad
 \|h\|_\pi^2\le2\|H\|_{\mathrm F}^2\le8\ell.
\end{equation}
On a zero-envelope component, \(FH=0\).
Because \(\Omega\) annihilates no nonzero target row, \(Y=0\) on that
component as well.

\paragraph{Raw cells and their covariance.}
Use the radial scale \(r_0=\sqrt\tau/32\).
Choose the cell diameter and side length as
\begin{equation}\label{cell-scales}
 \eta=r_0\sin(\theta/4)
 \qquad\text{and}\qquad
 b=\eta/\sqrt r,
\end{equation}
respectively, and set the face-collar width to \(\omega=\tau b/(48r)\).
Consider a translated grid of side \(b\) in \(\mathbb R^r\).
Assign each face point to one of its adjacent cells by a fixed
rule, so that every graph row belongs to a unique cell. Call a
cell active if it contains a graph row \(U_0(v)\) with norm
greater than \(r_0\). Only finitely many cells are active.
Each cell has diameter \(\eta<r_0\). Every point in the cell
containing \(0\) has norm at most \(\eta\), so that cell is
inactive. In particular, the vector of cutoffs on every zero-envelope
component will be constant.

Every row in an active cell has norm greater than \(r_0-\eta\).
Two vectors of such norms whose projective angle is at least
\(\theta/2\) have distance at least
\[
 2(r_0-\eta)\sin(\theta/4)
 =2[1-\sin(\theta/4)]\eta>\eta.
\]
They cannot lie in the same cell. For an original far pair in an
active cell, the projective angle of the \(U_0\)-rows is the same
as that of the projected target rows: the factors \(h(v)\) and
\(\|Y(v)\|\) are positive. Every such pair therefore appears in
the sum in \eqref{gauss-pair-bound}.

For each active cell \(i\), let \(V_i\) be the set of vertices
whose \(U_0\)-rows belong to that cell, and define
\[
 \Sigma_i=Y^*\one_{V_i}Y.
\]
Its far-pair weight is at most \(\delta^2/2\), so
Lemma~\ref{cell-far-pairs} implies \(\tail(\Sigma_i)\le\delta\).
The vertex sets \(V_i\) are disjoint and \(Y^*Y=I_\ell\). Thus
\begin{equation}\label{cell-raw-covariances}
 \tail(\Sigma_i)\le\delta\quad\text{for every active cell }i,
 \qquad \sum_i\Sigma_i\preceq I_\ell.
\end{equation}
This holds for every grid shift and concerns the entire raw cell.

\paragraph{A shift with small collar mass.}
For every row satisfying
\(\|Y(v)\Omega /\sqrt r\|^2\ge\|Y(v)\|^2/2\) and
\(\|U_0(v)\|<3r_0\), the identity \(FH=Y\Omega/\sqrt r=hU_0\) gives
\[
 \|Y(v)\|^2\le2\|Y(v)\Omega /\sqrt r\|^2
             \le18r_0^2h(v)^2.
\]
The excluded rows have total mass at most \(\tau\ell/4\) by
\eqref{gauss-norm-bound}. Therefore
\begin{equation}\label{cell-origin-mass}
 \sum_{\|U_0(v)\|<3r_0}m_v
 \le18r_0^2\|h\|_\pi^2+\tau\ell/4
 \le(144/1024+1/4)\tau\ell<\tau\ell/2.
\end{equation}
Choose the grid shift uniformly in \([0,b)^r\). A fixed point lies
within \(3\omega\) of a grid face with probability at most
\(6r\omega/b=\tau/8\), by a union bound over coordinates.
The expected target mass in these face collars is therefore at most
\(\tau\ell/8\). By Markov's inequality, their mass is at most
\(\tau\ell/2\) for a set of shifts of probability at least \(3/4\).
Fix such a shift. By
\eqref{cell-origin-mass}, the union of the face collars and the
origin ball has target mass at most \(\tau\ell\).

\paragraph{Disjoint cutoffs and their loss.}
Independently for each active cell choose
\(t_i^{\partial}\) uniformly in \([\omega,2\omega]\) and \(t_i^{\mathrm{rad}}\) uniformly in
\([r_0,2r_0]\). For \(y\in\R^r\) in this cell, let \(d_\partial(y)\)
be its distance to the cell boundary. Writing \(t_+=\max\{t,0\}\), define
\begin{align}
 q_i(y)&=\min\left\{1,\
       \left(\frac{d_\partial(y)-t_i^{\partial}}{\sigma \omega}\right)_+,\
       \left(\frac{\|y\|-t_i^{\mathrm{rad}}}{\sigma r_0}\right)_+\right\},
       \label{stage-local-ramp}\\
 \chi_i(y)&=(2q_i(y)-1)_+,\qquad
 \beta(y)=(1-2q_i(y))_+.\label{stage-direct-masks}
\end{align}
All other cutoff functions vanish in this cell. On inactive
cells put every \(\chi_i=0\) and \(\beta=1\).
At most one coordinate of the vector of cutoffs is positive.
Define its squared-norm deficit by
\[
 \zeta(y)=1-\sum_i\chi_i(y)^2-\beta(y)^2.
\]
On an active cell, the definitions in \eqref{stage-direct-masks}
give
\begin{equation}\label{stage-exact-deficit}
 \zeta(y)=1-(2q_i(y)-1)^2\in[0,1],
\end{equation}
and on inactive cells \(\zeta(y)=0\).
We use the same symbols for the pullbacks to $V$ under $U_0$:
evaluating at \(y=U_0(v)\), we write
\(\chi_i(v),\beta(v),\zeta(v)\) for these values.

For a fixed row in cell \(i\), a positive deficit requires
\(0<q_i<1\). At least one ramp in \eqref{stage-local-ramp} must
then lie strictly between zero and one. The boundary ramp does
so only if
\[
 d_\partial(y)-\sigma \omega<t_i^{\partial}<d_\partial(y),
\]
an interval of length \(\sigma \omega\), intersected with a sampling
interval of length \(\omega\), so its probability is at most \(\sigma\).
The radial condition similarly restricts \(t_i^{\mathrm{rad}}\) to an interval
of length \(\sigma r_0\), intersected with its sampling interval
of length \(r_0\).
Since \(0\le \zeta(v)\le1\), a union bound gives
\(\mathbb E \zeta(v)\le2\sigma\) for every \(v\).
Define the matrices
\[
 M_i=Y^*\one_{V_i}\zeta Y.
\]
Each uses only the two random thresholds of its own cell, so these
matrices are independent. They satisfy \(0\preceq M_i\preceq \Sigma_i\) and
\(\mathbb EM_i\preceq2\sigma \Sigma_i\).
Apply Lemma~\ref{lem:selection} with \(\alpha=2\sigma\) to choose
all thresholds so that
\begin{equation}\label{stage-selected-loss}
 Y^*\zeta Y=\sum_iM_i
 \preceq[8e\sigma+2\delta\log(2\ell)]I_\ell
 \preceq\frac1{8N_*}I_\ell=\varepsilon I_\ell.
\end{equation}
The last bound uses
\(8e\sigma=1/(16N_*)\) and
\(2\delta\log(2\ell)\le1/(16N_*)\), since \(\ell\le d\).
This proves \eqref{stage-loss}.

\paragraph{Remainder, tails, and continuity.}
The remaining assertions hold for every allowed threshold choice,
so they also hold for the selected one. If an active row has
\(\beta>0\), then \(q_i<1/2\). At least one ramp is less than
\(1/2\), placing the row either within
\(2\omega+\sigma \omega/2<3\omega\) of a face or within
\(2r_0+\sigma r_0/2<3r_0\) of the origin.
Inactive graph rows have norm at most \(r_0\). All rows with \(\beta>0\)
therefore lie in the union of the face collars and the origin ball.
Since \(\beta^2\le1\), this gives
\(\tr(Y^*\beta^2Y)\le\tau\ell\).
For each \(i\), the cutoff \(\chi_i\) is supported on \(V_i\) and satisfies
\(0\le\chi_i^2\le1\), hence
\(0\preceq Y^*\chi_i^2Y\preceq \Sigma_i\).
Monotonicity of the rank-one trace tail and
\eqref{cell-raw-covariances} prove the other assertion in
\eqref{stage-buffer-tail}.

Both distance to the boundary and Euclidean norm are
\(1\)-Lipschitz. Taking a positive part or a minimum preserves a
common Lipschitz bound. Since \(\omega<r_0\), the function \(q_i\) in
\eqref{stage-local-ramp} is therefore \(1/(\sigma \omega)\)-Lipschitz
inside its cell. The map
\[
 q\longmapsto((2q-1)_+,(1-2q)_+),\qquad 0\le q\le1,
\]
is \(2\)-Lipschitz. On the same side of \(1/2\) this follows
directly from its slope. For inputs \(q\ge1/2\ge q'\), the distance
between their images is at most the sum of the two nonzero coordinates,
\((2q-1)+(1-2q')=2(q-q')\).
At every face, all \(\chi_i=0\) and \(\beta=1\), independent of
the local parameters. Subdivide a line
segment at its grid-face crossings and sum the within-cell bounds.
This proves that the entire vector is globally \(2/(\sigma \omega)\)-
Lipschitz in \(U_0\); a segment contained in a face has a constant
cutoff vector. By \eqref{stage-envelope} it has the same Lipschitz
bound in \(U\).

Finally, concavity of sine gives
\(\sin(\theta/4)\ge\sin\theta/4=\sqrt\delta/8\). From
\eqref{cell-scales},
\begin{equation}\label{stage-explicit-lipschitz}
 \kappa=\frac2{\sigma \omega}
 \le\frac{24576\,r^{3/2}}{\sigma\tau^{3/2}\sqrt\delta}.
\end{equation}
Equations \eqref{stage-parameters} and \eqref{gauss-parameters}
give \(r=O(1+\log(d+1))\),
\(\sigma^{-1}=O(1+\log(d+1))\), and
\(\delta^{-1/2}=O(1+\log(d+1))\).
Substituting in \eqref{stage-explicit-lipschitz} proves
\eqref{stage-lipschitz}.
\end{proof}

\section{Adaptive localization of the low eigenspace}\label{sec:assembly}

We now apply Proposition~\ref{prop:stage} repeatedly, with the
parameters fixed in~\eqref{stage-parameters}.

\begin{proposition}\label{prop:fields}
There is an absolute constant $C$ such that the eigenfield
\eqref{eq:eigenfield} admits finitely many fields $X_i$ with pairwise
disjoint supports and
\begin{equation}\label{eq:fields}
 \sum_iX_i^*X_i\succeq\frac58I_d
 \qquad\text{and}\qquad
 \sum_iX_i^*\Delta X_i\preceq C[1+\log(d+1)]^{10}\nu I_d.
\end{equation}
Each covariance also satisfies $\tail(X_i^*X_i)\le\delta$, where
$\delta=1/(32N_*\log(2d))<1/4$.
\end{proposition}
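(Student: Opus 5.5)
We iterate Proposition~\ref{prop:stage} and keep track of the remainder. Put $Z_0=I_d$, $\ell_0=d$, and $R_0=1$, the cumulative remainder cutoff, so that the current field is $R_0F$. At stage $s$ we have a scalar cutoff $R_s$ with $0\le R_s\le1$ and an orthonormal $Z_s\in\R^{d\times\ell_s}$. The directions in $\ran Z_s^\perp$ are ``finished'': the remainder $R_sF$ carries little covariance in those directions. We apply Proposition~\ref{prop:stage} to $Y_s=FZ_s$. This gives cutoffs $\chi^{(s)}_i$ and $\beta^{(s)}$ and an embedding $U_s$ with Lipschitz constant $\kappa_s\le C[1+\log(d+1)]^{7/2}$. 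The new fields are $X^{(s)}_i=R_s\chi^{(s)}_iF$, taken with all $d$ columns, and the new remainder is $R_{s+1}=R_s\beta^{(s)}$.

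Supports are disjoint within a stage because the $\chi_i^{(s)}$ have disjoint supports and are disjoint from $\beta^{(s)}$. Across stages they are disjoint because later fields live in $\supp R_{s+1}\subseteq\supp\beta^{(s)}$. The tail bound follows from monotonicity of $\tail$ and $R_s^2\le 1$, once we apply it to the compression to the relevant directions. Here lies a subtlety: Proposition~\ref{prop:stage} controls $\tail(Y_s^*\chi_i^2Y_s)$ only in the $Z_s$ directions. I would therefore define the output fields as $X^{(s)}_i=R_s\chi^{(s)}_iFZ_sZ_s^*$, so that $X_i^*X_i=Z_s(Y_s^*R_s^2\chi_i^2Y_s)Z_s^*$. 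This matrix has the same tail, and it is $\preceq Z_sY_s^*\chi_i^2Y_sZ_s^*$, which gives tail at most $\delta$.

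\textbf{Dimension reduction and covariance bookkeeping.} After stage $s$, let $T_s=Y_s^*R_{s+1}^2Y_s$. This satisfies $T_s\preceq Y_s^*(\beta^{(s)})^2Y_s$, so $\tr T_s\le\ell_s/8$. Let $Z_{s+1}$ be $Z_s$ times an orthonormal basis of the eigenspace of $T_s$ with eigenvalues $>1/4$. Then $\ell_{s+1}<\ell_s/2$, so after at most $N_*$ stages $\ell=0$. For the lower bound on $\sum X_i^*X_i$, fix a unit $x$ and track $\norm{R_sFZ_sZ_s^*x}^2$. Each stage splits this mass, up to loss $\varepsilon$ from \eqref{stage-loss}, into the part $\sum_i\norm{X_i^{(s)}x}^2$ and the remainder $\norm{R_{s+1}FZ_sZ_s^*x}^2$. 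Components in the discarded eigenspace have remainder mass at most $1/4$. The main obstacle is that at each stage, discarded and kept directions interact through cross terms. To handle this I would prove inductively the matrix inequality
\[
\sum_{\text{stages}\le s}\ \sum_iX_i^*X_i\succeq(1-s\varepsilon)I_d-\tfrac14(I-Z_{s+1}Z_{s+1}^*)-(\text{remainder in }Z_{s+1}\text{ block}),
\]
and I would bound the remainder cross terms using $T_s\preceq\frac14$ on the discarded block; no further regularity of $R$ should be needed. Ending with $N_*\varepsilon=1/8$, this gives $\succeq(1-\frac18-\frac14)I=\frac58 I$. I expect this step to be the hardest. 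If cross terms break it, I would instead project with $Z_sZ_s^*$ only on the left of the loss inequality and use the Cauchy--Schwarz splitting $\norm{a+b}^2\le(1+\eta)\norm a^2+(1+\eta^{-1})\norm b^2$.

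\textbf{Energy.} This is where Lemma~\ref{lem:localization} enters with multiple envelopes. At stage $s$ the cutoff vector $z=(R_s\chi_i^{(s)})_i$ satisfies, by the product rule $|ab-a'b'|^2\le2|a-a'|^2+2|b-b'|^2$ for values in $[0,1]$ and the Lipschitz bounds of all previous stages,
\[
D_z(v,w)\le\sum_{q\le s}C\kappa_q^2\norm{U_q(v)-U_q(w)}^2 ,
\]
with a constant $C$ growing like $2^s$ if done naively. That growth is too much, so I would telescope instead. The increments of the product $R_s=\prod_{q<s}\beta^{(q)}$ satisfy $|R_s(v)-R_s(w)|\le\sum_q|\beta^{(q)}(v)-\beta^{(q)}(w)|$, giving by Cauchy--Schwarz a factor $N_*\sum_q\kappa_q^2$. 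Bounding the gradient of $R_s$ separately from that of $\chi_i^{(s)}$ adds only a factor $2$. Lemma~\ref{lem:localization} then gives
\[
\sum_i\E(X_i^{(s)}x)\le C N_*^2\kappa^2\nu\norm x^2 .
\]
Summing over $N_*$ stages yields $O(N_*^3\kappa^2)\nu$. Since $\kappa^2=O([1+\log(d+1)]^7)$, this is $O([1+\log(d+1)]^{10})\nu$. The identity $\sum_iX_i^*\Delta X_i$ as a quadratic form equals $\sum_i\E(X_ix)$, where $X_ix$ is a scalar localization of $F(Z_sZ_s^*x)$ and $\norm{Z_sZ_s^*x}\le\norm x$, so this proves \eqref{eq:fields}.
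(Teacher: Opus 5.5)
Your proposal is correct and follows the paper's proof essentially step for step. It uses the same compressed fields (your $R_s\chi_i^{(s)}FZ_sZ_s^*$ is the paper's $\bar\beta_{j-1}\chi_{j,i}F\Pi_{j-1}$), the same spectral projection at threshold $1/4$ with halving of the rank, the same tail transfer, and the same multiple-envelope energy bound via a telescoped product. You apply that bound stage by stage rather than once to the whole priority vector $\psi$, which gives the same $O(N_*^3\kappa^2)$ result.

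The cross terms you worry about in the coverage step vanish identically. $Z_{s+1}Z_{s+1}^*$ is a spectral projection of $Z_sT_sZ_s^*$, so it commutes with it. Hence $Z_sT_sZ_s^*$ splits exactly into the kept block $Z_{s+1}Y_{s+1}^*R_{s+1}^2Y_{s+1}Z_{s+1}^*$ and a discarded block $\preceq\frac14(Z_sZ_s^*-Z_{s+1}Z_{s+1}^*)$. The resulting identity telescopes to $\frac58I_d$ without any Cauchy--Schwarz fallback.
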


\Needspace{5\baselineskip}
\begin{proof}
Start with the coefficient projection $\Pi_0=I_d$ and the
accumulated remainder cutoff $\bar\beta_0=1$.
At stage $j$, while $\Pi_{j-1}\ne0$, set $\ell_j=\rank\Pi_{j-1}$
and choose $Z_j\in\R^{d\times\ell_j}$ whose columns form an orthonormal
basis of $\ran\Pi_{j-1}$. Apply Proposition~\ref{prop:stage} to $Y_j=FZ_j$.
The proposition applies to every such basis after the earlier
choices have been fixed; independence between stages is unnecessary.
For the resulting cutoff functions $\chi_{j,i}$ and remainder cutoff $\beta_j$,
define the accumulated remainder cutoff and the fields selected at stage $j$ by
\[
 \bar\beta_j=\bar\beta_{j-1}\beta_j
 \qquad\text{and}\qquad
 X_{j,i}=\bar\beta_{j-1}\chi_{j,i}F\Pi_{j-1},
\]
respectively. The covariance of the remainder before the next projection is
\[
 \widetilde R_j=\Pi_{j-1}F^*\bar\beta_j^2F\Pi_{j-1}.
\]
Let $\Pi_j\preceq \Pi_{j-1}$ be its spectral projection onto the eigenspaces
with eigenvalues greater than $1/4$. Define the projected covariance and the discarded part by
\[
 R_j=\Pi_j\widetilde R_j\Pi_j=\Pi_jF^*\bar\beta_j^2F\Pi_j
 \qquad\text{and}\qquad
 R_j^{\mathrm{disc}}=\widetilde R_j-R_j.
\]
For the squared-norm deficit
$\zeta_j=1-\sum_i\chi_{j,i}^2-\beta_j^2$, the lost covariance is
\[
 K_j=\Pi_{j-1}F^*\bar\beta_{j-1}^2\zeta_jF\Pi_{j-1}.
\]
Set $R_0=I_d$. Multiply the identity
$1=\sum_i\chi_{j,i}^2+\beta_j^2+\zeta_j$ by $\bar\beta_{j-1}^2$, apply
$F^*(\cdot)F$, and compress to $\ran \Pi_{j-1}$.
Since $\widetilde R_j=R_j^{\mathrm{disc}}+R_j$, this gives
\begin{equation}\label{eq:retention-recursion}
 R_{j-1}=\sum_iX_{j,i}^*X_{j,i}+K_j+R_j^{\mathrm{disc}}+R_j.
\end{equation}

Since $Z_jZ_j^*=\Pi_{j-1}$, the trace bound for the stage
remainder and $\bar\beta_{j-1}^2\le1$ give
\[
 \tr\widetilde R_j
 =\tr(Y_j^*\bar\beta_{j-1}^2\beta_j^2Y_j)
 \le\tr(Y_j^*\beta_j^2Y_j)
 \le\ell_j/8.
\]
The eigenvalues of $\widetilde R_j$ on $\ran\Pi_j$ exceed $1/4$, so
$(\rank \Pi_j)/4<\ell_j/8$ whenever $\rank \Pi_j>0$.
Thus the rank decreases by at least half at every stage. Since
$d/2^{N_*}<1$, the construction reaches $\Pi_N=0$ for some $N\le N_*$.

\paragraph{Coverage and disjoint supports.}
The spectral projection $\Pi_j$ splits $\widetilde R_j$ into orthogonal
blocks, with eigenvalues at most $1/4$ on the complementary block. Thus
\[
 0\preceq R_j^{\mathrm{disc}}\preceq\frac14(\Pi_{j-1}-\Pi_j).
\]
The differences $\Pi_{j-1}-\Pi_j$ are mutually orthogonal projections,
and their sum is $\Pi_0-\Pi_N=I_d$. Hence $\sum_jR_j^{\mathrm{disc}}\preceq I_d/4$.
Since $\zeta_j\ge0$ and $\bar\beta_{j-1}^2\le1$, the stage loss bound gives
\[
 0\preceq K_j
 \preceq \Pi_{j-1}F^*\zeta_jF\Pi_{j-1}
 =Z_j(Y_j^*\zeta_jY_j)Z_j^*
 \preceq\varepsilon \Pi_{j-1}.
\]
Summing over the at most $N_*$ stages yields
$\sum_jK_j\preceq N_*\varepsilon I_d=I_d/8$.
Telescoping \eqref{eq:retention-recursion} therefore yields
\begin{equation}\label{eq:retention-coverage}
 \sum_{j,i}X_{j,i}^*X_{j,i}\succeq\frac58I_d.
\end{equation}
Within a stage the cutoff functions have disjoint supports.
Where $\chi_{j,i}>0$, we have $\beta_j=0$, so all later accumulated
remainder cutoffs vanish there. The fields $X_{j,i}$ therefore have pairwise
disjoint supports across all stages.

The covariance-tail transfer is explicit:
\[
 X_{j,i}^*X_{j,i}
 =Z_j\bigl[Y_j^*\bar\beta_{j-1}^2\chi_{j,i}^2Y_j\bigr]Z_j^*.
\]
The bracketed covariance is bounded above by $Y_j^*\chi_{j,i}^2Y_j$.
Its trace tail is at most $\delta$ by \eqref{stage-buffer-tail}
and monotonicity in \eqref{eq:tail}.
The outer isometric embedding adds only zero eigenvalues and hence
preserves this bound.

\paragraph{Energy of the scalar cutoffs.}
Write
\[
 \psi_{j,i}=\chi_{j,i}\prod_{q<j}\beta_q.
\]
Set $\chi_j=(\chi_{j,i})_i$ and $z_j=(\chi_j,\beta_j)$.
Temporarily include the final remainder cutoff $\prod_{q\le N}\beta_q$ as one
additional coordinate of $\psi$. Define the weighted suffix vectors
recursively by
\[
 \xi_{N+1}=1,\qquad \xi_q=(\chi_q,\beta_q\xi_{q+1})
 \quad (q=N,\ldots,1).
\]
Then $\psi=\xi_1$, and $\norm{\xi_q}\le1$ follows inductively from
$\norm{z_q}\le1$. This holds for any tuple of stage vectors in their
unit balls, including tuples mixing values at different vertices.

In the algebraic expression for $\psi$, vary only $z_j$ and hold the
other stage vectors fixed. The affected coordinates are
$\bigl(\prod_{q<j}\beta_q\bigr)(\chi_j,\beta_j\xi_{j+1})$.
The prefix product has absolute value at most one, and the suffix vector
has norm at most one, so the squared change in $\psi$ is at most the squared change
in $z_j$. Replace the stage vectors one at a time and apply
Cauchy--Schwarz to obtain
\begin{equation}\label{eq:priority-differences}
 \norm{\psi(v)-\psi(w)}^2
 \le N\sum_{j=1}^N\norm{z_j(v)-z_j(w)}^2
 \le N\kappa^2\sum_{j=1}^N\norm{U_j(v)-U_j(w)}^2.
\end{equation}
Omitting the final remainder cutoff can only decrease the left side.
Here $\kappa$ is a common upper bound for the stage Lipschitz constants,
with $\kappa\le C[1+\log(d+1)]^{7/2}$ by~\eqref{stage-lipschitz}.

Apply the multiple-envelope bound of Lemma~\ref{lem:localization}
to~\eqref{eq:priority-differences}. For every $x\in\R^d$, it gives
\begin{equation}\label{eq:unprojected-energy}
 \sum_{j,i}\E(\psi_{j,i}Fx)
 \le(1+80N^2\kappa^2)\nu\norm x^2.
\end{equation}

\paragraph{Changing coefficient projections.}
Fix $x\in\R^d$. For each $j$, apply~\eqref{eq:unprojected-energy}
to the coefficient vector $\Pi_{j-1}x$ and keep only the nonnegative
energy terms from stage $j$. Since
$X_{j,i}x=\psi_{j,i}F\Pi_{j-1}x$, summing the resulting bounds gives
\begin{equation}\label{eq:projected-energy}
 \sum_{j,i}\E(X_{j,i}x)
 \le(1+80N^2\kappa^2)\nu\sum_{j=1}^N\norm{\Pi_{j-1}x}^2
 \le N(1+80N^2\kappa^2)\nu\norm x^2.
\end{equation}
This estimate does not require the coefficient projections to
commute with $\Lambda$. Since $N=O(1+\log(d+1))$ and
$\kappa^2=O([1+\log(d+1)]^7)$, the factor $N(1+80N^2\kappa^2)$ is at most
$C[1+\log(d+1)]^{10}$. This proves the required matrix energy bound;
together with~\eqref{eq:retention-coverage} and the covariance-tail
bound, it completes the proof.
\end{proof}

\Needspace{14\baselineskip}
\section{Exactly the required number of cuts}\label{sec:extraction}

The last step uses the trace-tail information to cap the number of
local modes needed from each support at two.

\begin{lemma}\label{lem:capacity}
Suppose finitely many fields $X_i:\R^d\to L^2(\pi)$ have pairwise
disjoint nonempty supports $B_i$ and satisfy
\[
 \sum_iX_i^*X_i\succeq\gamma I_d,\qquad
 \sum_iX_i^*\Delta X_i\preceq\mathcal T I_d,\qquad
 \tail(X_i^*X_i)\le\delta<\gamma,
\]
where $\mathcal T>0$ and $\delta\ge0$.
Set $\lambda_{\mathrm{cut}}=2\mathcal T/(\gamma-\delta)$, and let $n_i$ be the number of
eigenvalues of $\Delta_{B_i}^{\mathrm D}$ at most $\lambda_{\mathrm{cut}}$, counted with
multiplicity. Then
\begin{equation}\label{eq:capped-capacity}
 \sum_i\min(n_i,2)\ge d.
\end{equation}
\end{lemma}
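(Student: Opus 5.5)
We argue by contradiction with a dimension count. Suppose $\sum_i\min(n_i,2)<d$. For each $i$ let $E_i\subseteq L^2(\pi|_{B_i})$ be the span of the eigenfunctions of $\Delta_{B_i}^{\mathrm D}$ with eigenvalues at most $\lambda_{\mathrm{cut}}$, and let $E_i'\subseteq E_i$ be the span of the first $\min(n_i,2)$ of them. The plan is to find a unit $x\in\R^d$ satisfying $\sum_i\min(n_i,2)$ linear conditions that make $\sum_i\E(X_ix)$ large compared with $\sum_i\norm{X_ix}_\pi^2$. This contradicts $\sum_i X_i^*\Delta X_i\preceq\mathcal T I$ together with $\sum_iX_i^*X_i\succeq\gamma I$. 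Since $X_ix$ is supported in $B_i$, we have $\E(X_ix)=\langle X_ix,\Delta_{B_i}^{\mathrm D}X_ix\rangle_\pi$. So on the part of $X_ix$ orthogonal to $E_i$, the energy is at least $\lambda_{\mathrm{cut}}$ times its squared norm.

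\textbf{Constraints on each cell.} For cells with $n_i\ge2$ we impose two conditions, and for cells with $n_i\le1$ we impose $n_i$ conditions. The key step is to handle cells where $n_i\ge 2$ without imposing more than two constraints, and here the trace tail enters. Write $\Sigma_i=X_i^*X_i=u_i\lambda_{\max}u_i^*+\Sigma_i^\perp$ with $\tr\Sigma_i^\perp\le\delta$. Split $X_ix=X_iu_iu_i^*x+X_i(I-u_iu_i^*)x$. The first summand is a fixed function $g_i=X_iu_i$ times the scalar $u_i^*x$. Impose the condition $u_i^*x=0$, which costs one constraint, or more precisely require orthogonality of $g_i(u_i^*x)$ to the relevant low modes. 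Then $X_ix=X_i(I-u_iu_i^*)x$, and summed over $i$ its squared norm is at most $\sum_i x^*\Sigma_i^\perp x$.

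This alone is too costly, since we want the good mass to come from cells, not to be killed. So I instead impose, for each $i$ with $n_i\ge1$, the conditions $\langle e,X_ix\rangle_\pi=0$ for $e$ ranging over a basis of $E_i'$. These are at most $\min(n_i,2)$ linear conditions on $x$. Since there are fewer than $d$ conditions in total, a unit $x$ satisfying all of them exists.

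\textbf{Bounding the low-mode mass.} For such $x$, the component of $X_ix$ in $E_i\ominus E_i'$ is nonzero only when $n_i\ge3$. The main obstacle is bounding that component's mass by roughly $x^*\Sigma_i^\perp x$. The intended argument is that $X_ix$ lies in the range of $X_i$, and the rank-one part $g_i(u_i^*x)$ contributes essentially a single direction. I would therefore impose the conditions more cleverly. Project $g_i$ onto $E_i$. If that projection is nonzero, use one constraint to kill the component of $X_ix$ along it, namely the projection of $g_i$ onto $E_i$, paired with $X_ix$. The remaining low-mode mass of $X_ix$ is then bounded by the mass of $X_i(I-u_iu_i^*)x$, which is at most $x^*\Sigma_i^\perp x\le\delta$ per cell. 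The delicate point is that summing this over cells needs $\sum_i x^*\Sigma_i^\perp x\le\delta$, not $\delta$ per cell. I would try to obtain it from $\sum_i\Sigma_i^\perp\preceq\sum_i\Sigma_i$ combined with the per-cell bound. If this only gives $\delta\|x\|^2$ after choosing $x$ in a good subspace, that suffices, and the hypothesis $\delta<\gamma$ suggests this is exactly the intended accounting.

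\textbf{Closing the estimate.} Granting that the total low-mode mass is at most $\delta$, the high-mode mass is at least $\gamma-\delta$. The energy is then at least $\lambda_{\mathrm{cut}}(\gamma-\delta)=2\mathcal T>\mathcal T$, which contradicts the energy bound.
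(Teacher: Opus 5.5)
There is a genuine gap, and it sits exactly at the step you call delicate. For a single admissible unit vector $x$, the per-cell bounds $x^*\Sigma_i^\perp x\le\delta$ only add up to $n_{\mathrm{two}}\delta$, where $n_{\mathrm{two}}$ is the number of cells with $n_i\ge2$. Nothing better holds in general: the tails of many cells may all point along a common direction, and $n_{\mathrm{two}}\delta$ can exceed $\gamma$.

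Your proposed remedy via $\sum_i\Sigma_i^\perp\preceq\sum_i\Sigma_i$ cannot produce $\delta$. The lemma assumes no upper bound on $\sum_iX_i^*X_i$, and even $\sum_i\Sigma_i\preceq I$ would only give $1$. Moreover, as written you spend two conditions on every cell with $n_i\ge2$, which uses up the whole budget $n_{\mathrm{cap}}=\sum_i\min(n_i,2)<d$. The admissible space may then be one-dimensional, so no freedom remains to pick $x$ in a ``good subspace''. And, as you note, orthogonality to $E_i'$ does not control $E_i\ominus E_i'$ when $n_i\ge3$.

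The missing idea is to economize on constraints and then take a trace. Impose only one condition on each cell with $n_i\ge2$, namely $x\perp u_i$. This is the option you dismissed as too costly, and it is what the paper does. Impose $n_i$ conditions on each other cell. The admissible space then has dimension $d_0\ge d-n_{\mathrm{cap}}+n_{\mathrm{two}}>n_{\mathrm{two}}$.

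Sum over an orthonormal basis $x_1,\dots,x_{d_0}$ of this space. Since $\sum_ax_ax_a^*\preceq I-u_iu_i^*$, each cell with $n_i\ge2$ contributes total mass $\sum_a\norm{X_ix_a}_\pi^2\le\tr[X_i^*X_i(I-u_iu_i^*)]=\tail(X_i^*X_i)\le\delta$. Altogether these cells contribute at most $n_{\mathrm{two}}\delta\le d_0\delta$. On the other cells, $\lambda_{\mathrm{cut}}\norm{X_ix_a}_\pi^2\le\E(X_ix_a)$. Coverage and energy then give $\gamma d_0\le n_{\mathrm{two}}\delta+\mathcal T d_0/\lambda_{\mathrm{cut}}\le d_0(\gamma+\delta)/2<\gamma d_0$, which is the contradiction.

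Your refined single constraint $x\perp X_i^*P_{E_i}X_iu_i$, with $P_{E_i}$ the projection onto $E_i$, would work equally well. It needs the same trace bound applied to the low-mode parts $\sum_a\norm{P_{E_i}X_ix_a}_\pi^2$, with the high-mode parts absorbed into the energy.

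Sacrificing the mass of the cells with $n_i\ge2$ is harmless, because the coverage hypothesis then forces the mass into the other cells. Averaging over a space of dimension larger than $n_{\mathrm{two}}$ is what turns the per-cell tail bound into a bound of $\delta$ per vector.
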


\Needspace{4\baselineskip}
\begin{proof}
Suppose the sum in \eqref{eq:capped-capacity} is $n_{\mathrm{cap}}<d$, and let
$n_{\mathrm{two}}$ be the number of domains with $n_i\ge2$.
For each such domain, choose a unit eigenvector $u_i$ for the largest
eigenvalue of $X_i^*X_i$ and impose the coefficient constraint $x\perp u_i$
on $x\in\R^d$. For each domain with $n_i\le1$, choose an orthonormal
basis of the spectral subspace of $\Delta_{B_i}^{\mathrm D}$ for
eigenvalues at most $\lambda_{\mathrm{cut}}$, and impose $x\perp X_i^*\varphi$ for
each basis vector $\varphi$. This imposes at most $n_i\le1$ constraints.
The total number of constraints is at most
$n_{\mathrm{two}}+(n_{\mathrm{cap}}-2n_{\mathrm{two}})=n_{\mathrm{cap}}-n_{\mathrm{two}}$. Their common kernel contains
$d_0=d-n_{\mathrm{cap}}+n_{\mathrm{two}}>n_{\mathrm{two}}$ orthonormal coefficient vectors
$x_1,\ldots,x_{d_0}\in\R^d$.

Applying the covariance and energy assumptions to these vectors
and summing gives
\[
 \sum_{i,a}\norm{X_ix_a}_\pi^2\ge\gamma d_0
 \qquad\text{and}\qquad
 \sum_{i,a}\E(X_ix_a)\le\mathcal T d_0.
\]
For a domain with $n_i\ge2$, the coefficient constraints give
$\sum_{a=1}^{d_0} x_ax_a^*\preceq I-u_iu_i^*$. Its total mass is therefore
\[
 \sum_{a=1}^{d_0}\norm{X_ix_a}_\pi^2
 =\tr\left(X_i^*X_i\sum_{a=1}^{d_0} x_ax_a^*\right)
 \le\tr\bigl[X_i^*X_i(I-u_iu_i^*)\bigr]
 =\tail(X_i^*X_i)\le\delta.
\]
On every domain with $n_i\le1$, each $X_ix_a$ is orthogonal
in $L^2(\pi|_{B_i})$ to the spectral subspace for eigenvalues at
most $\lambda_{\mathrm{cut}}$. Expansion in an orthonormal Dirichlet eigenbasis gives
$\lambda_{\mathrm{cut}}\norm{X_ix_a}_\pi^2\le\E(X_ix_a)$.
Summing over both types of domains gives
\[
 \gamma d_0\le n_{\mathrm{two}}\delta+\frac{\mathcal T d_0}{\lambda_{\mathrm{cut}}}
 \le d_0\delta+\frac{d_0(\gamma-\delta)}2
 =\frac{d_0(\gamma+\delta)}2<\gamma d_0,
\]
a contradiction.
\end{proof}

\begin{lemma}\label{lem:two-witnesses}
Let $\lambda_{\mathrm{cut}}\ge0$.
If $\Delta_B^{\mathrm D}$ has at least one eigenvalue at most $\lambda_{\mathrm{cut}}$, then $B$ supports
a nonzero nonnegative function of Rayleigh quotient at most $\lambda_{\mathrm{cut}}$.
If it has at least two eigenvalues at most $\lambda_{\mathrm{cut}}$, then $B$ supports two such
functions with disjoint supports.
\end{lemma}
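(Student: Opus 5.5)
Both assertions follow from the Dirichlet variational principle on $B$ together with two standard facts about energy: taking absolute values and taking positive or negative parts does not increase it. All functions are supported in $B$ and extended by zero, so $\E$ is the original graph energy and the boundary edges are charged exactly as in $\Delta_B^{\mathrm D}$. The Rayleigh quotient of a function supported in $B$ is therefore the quadratic form of $\Delta_B^{\mathrm D}$ divided by the $\pi|_B$-norm.

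\emph{One eigenvalue.} Let $g$ be an eigenfunction of $\Delta_B^{\mathrm D}$ for its smallest eigenvalue $\mu_1\le\lambda_{\mathrm{cut}}$. Since $(|g(v)|-|g(w)|)^2\le(g(v)-g(w))^2$ edgewise, including boundary edges where one value is zero, we have $\E(|g|)\le\E(g)$, while $\norm{|g|}_\pi=\norm g_\pi$. Hence $|g|$ is nonzero, nonnegative, supported in $B$, and satisfies $\mathcal R_G(|g|)\le\mu_1\le\lambda_{\mathrm{cut}}$.

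\emph{Two eigenvalues.} Take orthonormal eigenfunctions $g_1,g_2$ of $\Delta_B^{\mathrm D}$ with eigenvalues $\mu_1\le\mu_2\le\lambda_{\mathrm{cut}}$. The aim is a nonzero $g$ in their span with $\mathcal R_G(g)\le\lambda_{\mathrm{cut}}$ that changes sign. Every nonzero $g$ in the span has quotient at most $\mu_2\le\lambda_{\mathrm{cut}}$, so it suffices to choose $g$ in the span orthogonal in $L^2(\pi)$ to some positive function on $B$, for instance the constant $\one_B$. This is one linear constraint on a two-dimensional space, so such a nonzero $g$ exists, and it must take both signs on $B$. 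Write $g=g_+-g_-$, where $g_\pm$ are nonzero, nonnegative, and have disjoint supports in $B$.

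The key step is the edgewise inequality
\[
(g(v)-g(w))^2\ge(g_+(v)-g_+(w))^2+(g_-(v)-g_-(w))^2.
\]
When $g(v)$ and $g(w)$ have the same sign, one of the two terms on the right vanishes and the other equals the left side. When they have opposite signs, say $g(v)=a>0$ and $g(w)=-b<0$, the left side is $(a+b)^2$ and the right side is $a^2+b^2$. Summing over edges gives $\E(g_+)+\E(g_-)\le\E(g)\le\lambda_{\mathrm{cut}}(\norm{g_+}_\pi^2+\norm{g_-}_\pi^2)$. It follows that at least one of $g_\pm$ has quotient at most $\lambda_{\mathrm{cut}}$, but this does not by itself give the bound for both.

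This is the main obstacle. To handle it, use the eigenvalue equation for $g$ directly. Because $\Delta_B^{\mathrm D}g_1=\mu_1g_1$ and $\Delta_B^{\mathrm D}g_2=\mu_2g_2$, the function $g=c_1g_1+c_2g_2$ satisfies $\langle g_+,\Delta_B^{\mathrm D}g\rangle_\pi=c_1\mu_1\langle g_+,g_1\rangle_\pi+c_2\mu_2\langle g_+,g_2\rangle_\pi$. This is not obviously bounded by $\lambda_{\mathrm{cut}}\norm{g_+}_\pi^2$, so instead compare $\E(g_+)$ with $\langle g_+,\Delta g\rangle_\pi$ edgewise. For each edge,
\[
(g_+(v)-g_+(w))(g(v)-g(w))\ge(g_+(v)-g_+(w))^2,
\]
because $g_-$ vanishes wherever $g_+$ is positive and $g_-\ge0$. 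This gives $\E(g_+)\le\E(g_+,g)$, and similarly $\E(g_-)\le-\E(g_-,g)$.

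Next, rather than the constant constraint, pick $g$ so that $\E(g_+,g)\le\lambda_{\mathrm{cut}}\norm{g_+}_\pi^2$ and the analogous bound for $g_-$ hold. Since $\E(g_\pm,g)=\langle g_\pm,\Delta_B^{\mathrm D}g\rangle_\pi$, this reduces to a condition on the span. If $\mu_1=\mu_2$, taking $g$ to be any eigenfunction with a sign change gives $\Delta_B^{\mathrm D}g=\mu_1 g$, and both bounds follow immediately. In general, a cleaner route is to replace the span by the operator $\lambda_{\mathrm{cut}}-\Delta_B^{\mathrm D}$ restricted to its positive spectral part, in the spirit of the standard two-nodal-domain argument. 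If that proves awkward, fall back on a Courant-type argument. Take $g=g_2$ when $\mu_2$ is simple; if $g_2$ does not change sign, then $|g_1|$ and $g_2$ both lie in the cone of nonnegative functions, and the Perron–Frobenius structure of $\Delta_B^{\mathrm D}$ on the connected pieces of $B$ yields two disjoint components, each carrying its own bottom eigenfunction.
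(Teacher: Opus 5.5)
Your key inequality $\E(g_+)\le\E(g_+,g)$ is the same as the paper's observation that $\langle f_+,\Delta_B^{\mathrm D}f_-\rangle_\pi\le0$, which holds because the Dirichlet matrix has nonpositive off-diagonal entries. So, as in the paper, any sign-changing eigenfunction with eigenvalue at most $\lambda_{\mathrm{cut}}$ yields two witnesses. The proposal is not yet a proof, though, because it does not produce such an eigenfunction, or a substitute, in every case.

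The ``cleaner route'' through the positive spectral part of $\lambda_{\mathrm{cut}}-\Delta_B^{\mathrm D}$ is not an argument as stated. A non-eigenfunction in the span will not do. For $g=c_1g_1+c_2g_2$ with $\mu_1<\mu_2$ you have $\Delta_B^{\mathrm D}g=\mu_2g-c_1(\mu_2-\mu_1)g_1$. When $g_1>0$, the correction term has opposite effects on $g_+$ and $g_-$, so bounding both parts by $\mu_2$ forces $c_1=0$, i.e.\ $g$ must be an eigenfunction.

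Your fallback covers only $\mu_1=\mu_2$ and simple $\mu_2$, so it omits $\mu_1<\mu_2$ with $\mu_2$ multiple. More importantly, in the decisive case where $g_2$ has one sign, you only assert that the Perron--Frobenius structure ``yields two disjoint components''. That case is exactly where the paper does its work. It splits $B$ into connected components and takes ground states on two different components when both have ground eigenvalue at most $\lambda_{\mathrm{cut}}$. Otherwise it works on a single connected component. There the eigen-equation forces a nonnegative ground state to be strictly positive, so a second eigenfunction orthogonal to it must change sign.

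Your own setup closes the gap more directly, without components or strict positivity. Let $\mu_1\le\mu_2$ be the two smallest eigenvalues. If $\mu_1=\mu_2$, a nonzero element of the eigenspace orthogonal to $\one_B$ changes sign, as you noted. If $\mu_1<\mu_2$, take any eigenfunction $g_2$ for $\mu_2$; if it changes sign, use its positive and negative parts. Otherwise assume $g_2\ge0$. Since $\mathcal R_G(|g_1|)\le\mu_1$ and $\mu_1$ is the bottom of the spectrum, $|g_1|$ lies in the $\mu_1$-eigenspace. It is therefore orthogonal to $g_2$ in $L^2(\pi)$. Two nonnegative orthogonal functions have disjoint supports, so $|g_1|$ and $g_2$ are themselves the required witnesses, with Rayleigh quotients $\mu_1,\mu_2\le\lambda_{\mathrm{cut}}$.
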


\begin{proof}
Replacing a minimizing function by its absolute value cannot
increase any original edge energy, so a ground eigenfunction can
be chosen nonnegative. This proves the first assertion.

Decompose $B$ into connected components using positive-weight edges
with both endpoints in $B$. The Dirichlet operator is the direct sum
of the operators on these components.
If two components have ground eigenvalues at most $\lambda_{\mathrm{cut}}$, take one
ground eigenfunction from each. Otherwise one component has two
eigenvalues at most $\lambda_{\mathrm{cut}}$. It remains to prove the assertion on a
connected domain, which we again denote by $B$.

A nonnegative ground eigenfunction $f_0$ on this connected domain is
strictly positive. Indeed, at a vertex with $f_0(v)=0$, its eigen-equation
gives
\[
 0=-\sum_{w\in B}P(v,w)f_0(w).
\]
Nonnegativity forces $f_0$ to vanish at every neighbor in $B$ joined to $v$
by an edge of positive weight.
Connectedness then gives $f_0\equiv0$, a contradiction. Choose an eigenfunction $f$ for the second
eigenvalue $\lambda_{\mathrm{loc}}$ of $\Delta_B^{\mathrm D}$, orthogonal to $f_0$
in $L^2(\pi|_B)$. Strict positivity of $f_0$ forces $f$ to have both
signs. Its positive and negative parts,
$f_+=\max\{f,0\}$ and $f_-=\max\{-f,0\}$, are consequently
nonzero and have disjoint supports. Taking the inner product of
$\Delta_B^{\mathrm D}(f_+-f_-)=\lambda_{\mathrm{loc}}(f_+-f_-)$ with $f_+$ gives
\[
 \E(f_+)-\langle f_+,\Delta_B^{\mathrm D}f_-\rangle_\pi
 =\lambda_{\mathrm{loc}}\norm{f_+}_\pi^2,
\]
because $\langle f_+,f_-\rangle_\pi=0$.
The Dirichlet matrix has nonpositive off-diagonal entries, and hence
\[
 \E(f_+)
 =\lambda_{\mathrm{loc}}\norm{f_+}_\pi^2+\langle f_+,\Delta_B^{\mathrm D}f_-\rangle_\pi
 \le\lambda_{\mathrm{loc}}\norm{f_+}_\pi^2.
\]
The same argument applied to $-f$ bounds $\E(f_-)$.
Both Rayleigh quotients are at most
$\lambda_{\mathrm{loc}}\le \lambda_{\mathrm{cut}}$. All these energies are computed by zero extension
in the original graph.
\end{proof}

\begin{lemma}[Scalar sweep]\label{lem:sweep}
Every nonzero nonnegative function $f$ has a nonempty set
$S\subseteq\supp f$ with
\[
 \phi_G(S)\le\sqrt{2\mathcal R_G(f)}.
\]
\end{lemma}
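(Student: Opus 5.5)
This is the standard one-dimensional sweep argument, done in the paper's normalization, with the level sets of $f^2$ used via the co-area formula. Write $R=\mathcal R_G(f)$ and, for $t>0$, $S_t=\{v:f(v)^2>t\}$. For $0<t<\max f^2$ each $S_t$ is nonempty and contained in $\supp f$. We aim to show
\[
 \int_0^\infty c(S_t,S_t^c)\,dt\le\sqrt{2R}\int_0^\infty\pi(S_t)\,dt .
\]
Granting this, some $t$ with $S_t\ne\varnothing$ satisfies $c(S_t,S_t^c)\le\sqrt{2R}\,\pi(S_t)$. Otherwise the reverse strict inequality would hold on a set of $t$ of positive measure, and would hold weakly elsewhere because both sides vanish when $S_t$ is empty. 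That $t$ gives the desired $S=S_t$.

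The right-hand integral is $\sum_v\pi(v)f(v)^2=\norm f_\pi^2$. For the left-hand integral, an ordered pair $(v,w)$ contributes $c_{vw}$ exactly for $t\in[f(w)^2,f(v)^2)$. Summing over ordered pairs gives
\[
 \int_0^\infty c(S_t,S_t^c)\,dt=\frac12\sum_{v,w}c_{vw}\,|f(v)^2-f(w)^2|.
\]
We then factor $|f(v)^2-f(w)^2|=|f(v)-f(w)|\,(f(v)+f(w))$ and apply Cauchy--Schwarz over ordered pairs with weights $c_{vw}$. This bounds the sum by
\[
 \Bigl(\tfrac12\sum_{v,w}c_{vw}(f(v)-f(w))^2\Bigr)^{1/2}
 \Bigl(\tfrac12\sum_{v,w}c_{vw}(f(v)+f(w))^2\Bigr)^{1/2}.
\]
The first factor is $\E(f)^{1/2}$ by \eqref{eq:energy}. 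For the second factor we use $(f(v)+f(w))^2\le2f(v)^2+2f(w)^2$, symmetry of $c$, and $\sum_wc_{vw}=\pi(v)$. These give $\tfrac12\sum c_{vw}(f(v)+f(w))^2\le2\norm f_\pi^2$. Combining, the left-hand integral is at most $\sqrt{2\E(f)}\,\norm f_\pi=\sqrt{2R}\,\norm f_\pi^2$, which is the claimed integral inequality.

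The main point needing care is self-loops. A loop at $v$ contributes to $\pi(v)$ through $d_v$ but never to $c(S,S^c)$, since its endpoints lie on the same side of every cut. In the Cauchy--Schwarz step it enters only the $(f(v)+f(w))^2$ factor, where the bound $2f(v)^2+2f(w)^2$ still holds. So the estimate survives with loops included in the degrees. The one-sided conductance convention means no volume-halving restriction is needed.
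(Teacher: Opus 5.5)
Your proposal is correct and follows the paper's proof step for step. It uses the same level sets $S_t=\{v:f(v)^2>t\}$, the same two co-area identities, the same Cauchy--Schwarz bound with $(f(v)+f(w))^2\le2(f(v)^2+f(w)^2)$, and the same averaging argument; your treatment of self-loops and your justification of the averaging step spell out details the paper leaves implicit.
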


\begin{proof}
For $S_t=\{v:f(v)^2>t\}$, integrating the contribution of each
vertex gives
\[
 \int_0^\infty\pi(S_t)\,dt=\norm f_\pi^2.
\]
An edge crosses $S_t$ precisely when $t$ lies between its endpoint
values of $f^2$, apart from endpoints of the interval. Hence
\[
 \int_0^\infty c(S_t,S_t^c)\,dt
      =\frac12\sum_{v,w}c_{vw}|f(v)^2-f(w)^2|.
\]
By Cauchy--Schwarz applied to the edge sum and $(f(v)+f(w))^2\le2(f(v)^2+f(w)^2)$,
\[
 \frac12\sum_{v,w}c_{vw}|f(v)^2-f(w)^2|
 \le
 \left[\E(f)\,\frac12\sum_{v,w}c_{vw}(f(v)+f(w))^2\right]^{1/2}
 \le\sqrt{2\E(f)\norm f_\pi^2}.
\]
The ratio of the two integrals is a volume-weighted average of the
conductances of the nonempty level sets. At least one such set
has conductance at most the asserted value.
\end{proof}

\begin{proof}[Proof of Theorem~\ref{thm:main}]
If $\lambda_k=0$, there are at least $k$ connected components.
Their indicators give $k$ nonnegative functions with disjoint supports and zero
energy, and the components themselves have zero conductance.
This also covers $k=1$.

Otherwise put $d=k$ and $\nu=\lambda_k$.
Apply Proposition~\ref{prop:fields} and discard zero fields.
Take $\gamma=5/8$ and the positive energy bound
$\mathcal T\le C[1+\log(k+1)]^{10}\lambda_k$ from that proposition.
Since $\delta<1/4$, we have $\gamma-\delta>3/8$.
Let $n_i$ count the Dirichlet eigenvalues on the support of $X_i$
that are at most
\[
 \lambda_{\mathrm{cut}}=\frac{2\mathcal T}{\gamma-\delta}
 \le C[1+\log(k+1)]^{10}\lambda_k.
\]
On that support, Lemma~\ref{lem:two-witnesses} gives
$\min(n_i,2)$ nonzero nonnegative functions with disjoint supports
and Rayleigh quotients at most $\lambda_{\mathrm{cut}}$.
The supports of the different fields are disjoint, so all these
functions have pairwise disjoint supports. There are at least $k$
of them by Lemma~\ref{lem:capacity}. Choosing any $k$ proves
\eqref{eq:main-functions}.
Apply Lemma~\ref{lem:sweep} separately to each witness. Each resulting
set is contained in its witness support, so all $k$ sets are
disjoint, and
\[
 \max_i\phi_G(S_i)\le\sqrt{2\lambda_{\mathrm{cut}}}
 \le C[1+\log(k+1)]^5\sqrt{\lambda_k}.
\]
After enlarging the absolute constant to cover both conclusions,
this is \eqref{eq:main}.
\end{proof}

\section*{Disclosure of AI assistance}
OpenAI's ChatGPT and Codex (GPT-6) were used to develop the
proof and prepare the manuscript. The human author takes full
responsibility for the mathematical arguments and the final manuscript.

\end{document}